\renewcommand*{\@fnsymbol}[1]{%
  \ifcase#1%
    \or 1\or 2\or 3\or 4\or 5%
    \or 6\or 7\or 8\or 9\or 10%
    \or 11\or 12\or 13\or 14\or 15%
    \or 16\or 17\or 18\or 19\or 20%
    \or 21\or 22\or 23\or 24\or 25%
    \or 26%
  \else
    \@ctrerr
  \fi
}
\renewcommand*{\@makefnmark}{\@fnsymbol\c@footnote}
\newtheorem{theorem}{Theorem}[section]
\newtheorem{lemma}[theorem]{Lemma}
\newtheorem{definition}{Definition}[section]
\newtheorem{example}{Example}[section]
\newtheorem{remark}{Remark}[section]
\newtheorem{proposition}[theorem]{Proposition}
\begin{document}

\title{ \textbf{Stochastic comparison of series and parallel systems' lifetime in Archimedean copula under random shock}}
\author{
\textbf{Sarikul Islam}\thanks{Department of Mathematics, Indian Institute of Technology Kharagpur, Kharagpur 721302, India. \newline \text{\quad} Email:  \href{mailto:sarikul_phd_math@kgpian.iitkgp.ac.in}{sarikul\_phd\_math@kgpian.iitkgp.ac.in}} 
\and 
\textbf{Nitin Gupta}\thanks{Department of Mathematics, Indian Institute of Technology Kharagpur, Kharagpur 721302, India.\newline\text{\quad} Email: \href{mailto:nitin.gupta@maths.iitkgp.ac.in}{nitin.gupta@maths.iitkgp.ac.in}}
}

  \date{}               
\maketitle

\section*{Abstract}
In this paper, we studied the stochastic ordering behavior of series as well as parallel systems' lifetimes comprising dependent and heterogeneous components, experiencing random shocks, and exhibiting distinct dependency structures. We establish certain conditions on the lifetime of individual components where the dependency among components defined by Archimedean copulas, and the impact of random shocks on the overall system lifetime to get the results. We consider components whose survival functions are either increasing log-concave or decreasing log-convex functions of the parameters involved. These conditions make it possible to compare the lifetimes of two systems using the usual stochastic order framework. Additionally, we provide examples and graphical representations to elucidate our theoretical findings.

\subsection*{Keywords} Usual stochastic order; majorization; super-additive(Sub-additive) function; schur-convex(Schur-concave) function.\\ \\
\textbf{2020 MSC Classification: } 60E15; 90B25; 62G30.
\section{Introduction} In reliability theory, a  \( n \)-component system is termed as \( k \)-out-of-\( n \) system if it remains operational if at least \( k \) of the total \(n\) components are working. Denote the \( k \)-th order statistic by \( X_{k:n} \), of the random variables \( X_1, \ldots, X_n \) which denotes the lifetime of a \( (n - k + 1) \) out-of-\( n \) system. In the past few decades, some articles have studied the stochastic ordering of order statistics when the random variables $X=(X_1, \cdots, X_n)$ represent the lifetimes of components, assuming that the component's lifetimes are identically and independently distributed. For example, \citet{PROSCHAN1976608}, \citet{pledger1971comparisons}, \citet{bergmann1991stochastic}, \citet{nelsen2006archimedean}, \citet{kochar2007stochastic}, and \citet{kochar2012stochastic} have studied these systems and compared the lifetimes of different systems. However, the lifetimes of components in a system may or may not be independently and identically distributed. To deal with the dependency and heterogeneity among the lifetimes of components, the multivariate Archimedean copula plays a significant role in the stochastic ordering of order statistics and comparing the reliabilities of systems, for reference few articles are \citet{balakrishnan2013ordering}, \citet{li2015ordering}, \citet{balakrishnan2020exponentiated}, \citet{erdely2014frank} and \citet{zhang2019stochastic} for the relevant theory dealing with non-identical and dependent components. In practical scenarios, the components may exhibit dependencies, and different systems may exhibit different dependency structures among their components. Furthermore, various external random shocks in the components may affect the lifetimes of the system. \citet{article} considered the case when the components are independently distributed and derived results for the stochastic ordering of the lifetimes of two fail-safe systems subject to random shock. \citet{balakrishnan2018ordering} derived some results regarding the largest claim amounts from two heterogeneous portfolios. However, the claims are assumed to be independent. Reliability systems often encounter shocks from external stress factors, which occur randomly and can significantly impact the system's reliability. The most recent article by \citet{amini2024comparison} focuses on the stochastic ordering of the lifetimes of series and parallel systems under the assumption that components are dependent and non-identical, all the systems exhibit the same dependency structure, represented by the same Archimedean copula, under the presence of random shocks in the components. But, in general, different systems may exhibit different dependency structures among their components' lifetime. So, It's naturally interesting to explore the circumstance in which the components within the system undergo random shocks, and survive after the random shocks with certain probabilities, and different systems exhibit different dependency structures among the components' lifetime. \citet{das2022ordering} studied the smallest insurance claim amounts from two portfolios using a very restricted model (exponentiated location-scale model) with several conditions imposed on the parameters and the generator. In this document, we'll study the stochastic comparison of lifetimes between series systems as well as parallel systems. We consider that the dependency structures among the components' lifetimes are represented by different Archimedean copulas with generators denoted by $\phi_1$ and $\phi_2$ (with pseudo-inverses $\psi_1$ and $\psi_2$ respectively). The components of the systems may experience random shocks, and their lifetimes follow distribution models with increasing log-concave or decreasing log-convex survival functions in parameters, which are very general models in statistical analyses.\newline

The paper is structured as follows: Section two presents several established concepts and theorems in the form of lemmas that are useful for the remaining sections. Section three discusses some results for the stochastic ordering of series and parallel systems in the presence of random shock when the composition function $\psi_2\circ\phi_1$ is super-additive. Section four delves into the case when the composition function $\psi_2\circ\phi_1$ is sub-additive. Section five discusses some potential applications of the derived results. We conclude the manuscript in section six. 
\section{Preliminaries}
Let's begin by revisiting certain concepts and important lemmas that are fundamental to the subsequent main findings. We employ the following notations where bold font represents vectors in $\mathbb{R}^n$.\newline  \( \mathbb{R} = (-\infty, \infty),\:\mathbb{R}^+ = (0, \infty),\:\mathbb{D}= \{\bm{\alpha}\:: \alpha_1 \geq \ldots \geq \alpha_n\},\:\mathbb{D}^+= \{\bm{\alpha}\:: \alpha_1 \geq \ldots \geq \alpha_n>0\},\, \mathbb{I} = \{\bm{\alpha}\:: \alpha_1 \leq \ldots \leq \alpha_n\},\: \mathbb{I}^+ = \{\bm{\alpha}\:: 0 < \alpha_1 \leq \ldots \leq \alpha_n\}
\)\newline\newline
 Let $X = (X_1, \ldots, X_n)$ be a random vector with dependent and non-identically distributed random variables and $X_i \sim F(\cdot; \alpha_i)$, where $F(\cdot; \alpha_i)$ represents the CDF (cumulative distribution function) of $X_i$ and $\alpha_i(>0)$ denotes the parameter of the distribution of $X_i$, for $i = 1, \ldots, n$. Suppose that the survival function of $X_i$, for $i = 1, \ldots, n$, is denoted by $\overline{F}(x; \alpha_i) = 1 - F(x; \alpha_i)$. Moreover, consider $I_{p_i}\sim Bernoulli(p_i)$ be independent of $X_i$'s, $E[I_{p_i}] = p_i$, for $i = 1, \ldots, n$. Here, $I_{p_1}, \ldots, I_{p_n}$ are employed to signify whether the system's components, having random lifetimes $X_1, \ldots, X_n$, have survived or failed after experiencing the random shock. Specifically, if $I_{p_i} = 1$, then  $i^{th}$ component survives after experiencing a random shock; else, if $I_{p_i} = 0$, the $i^{th}$ component fails to survive after experiencing a random shock, for $i = 1, \ldots, n$. Therefore, the random variables $X_1I_{p_1}, \ldots, X_nI_{p_n}$ represent the lifetimes of the systems' components, subject to random shock effect and we denote $X_{p_i}:= X_iI_{p_i}$, for $i = 1, \ldots, n$. Hence, $X_{p_i}$ is comprising two random variables: first one is $X_i$ with CDF $F_{X_i}(x,\alpha_i)$, the second one is a Bernoulli random variable $I_{p_i}$ with mean $p_i$. Following simple proof steps we find, the survival function of $X_{p_i}:= X_iI_{p_i}$ is $\overline{F}_{X_{p_i}}(x) = p_i \overline{F}_{X_i}(x)$, for each $i = 1, \ldots, n$. Therefore,
\[
X_{p_i} = X_iI_{p_i}=
\begin{cases}
    X_i & \text{if the $i^{th}$ component survives after the random shock.}\\
    0 & \text{if the $i^{th}$ component does not survive after the random shock.}    
\end{cases} \] 
\begin{definition} (Def 2.4, \citet{li2015ordering}) Let $\phi\,: [0,\,\infty) \to [0,\,1]$ be a non-increasing and continuous function such that $\phi(0)=1$ and $\phi(+\infty)=0$, let $\psi=\phi^{-1}$ be the pseudo-inverse of $\phi$, the function $C_{\phi}$ defined as $$ C_{\phi}(x_1,\,x_2,\cdots,\,x_n)=\phi(\psi(x_1)+\psi(x_2)+\cdots+\psi(x_n))\,\,\,\mbox{for}\,\,\, (x_1,\,x_2,\cdots,x_n)\in[0,\,1]^n$$ is called Archimedean copula and $\phi$ is known as Archimedean generator for $C_{\phi}$ if $(-1)^k\phi^{(-k)}\geq0$ for $k=0,1,...,n-2$ and $(-1)^{(n-2)}\phi^{(n-2)}$ is non-increasing and convex. Archimedean copula gives us joint CDF in terms of the generator when the marginal CDFs are given. Similarly, Archimedean survival copula $\hat{C}_{\phi}$ is defined. It relates the survival function of joint distribution to the survival functions of marginal distributions.\newline\newline
\textbf{Note:}
 If $X = (X_1, \ldots, X_n)$ is a random vector with marginal CDFs $F_1,\cdots, F_n$ and the Archimedean copula generator $\phi$, then the CDF of $n$-th order statistics $X_{n:n}$ is,
$$
\begin{aligned} 
 F_{X_{n:n}}(x)&=\mathbb{P}(X_1\le x,\cdots, X_n\le x)\\&=C_{\phi}(F_1(x),\, F_2(x),\cdots,\, F_n(x)) \\
 &=\phi(\psi(F_1(x))+\psi(F_2(x))+\cdots+\psi(F_n(x))).
 \end{aligned}
 $$
 Now if $\phi$ is the generator of the Archimedean survival copula, Then the survival function of the $1$-st order statistics $X_{1:n}$ is, 
$$
\begin{aligned} 
 \overline{F}_{X_{1:n}}(x)&=\mathbb{P}(X_1> x,\cdots, X_n> x)\\ &=\hat{C}_{\phi}(\bar{F}_1(x),\,\bar{F}_2(x),\cdots,\,\bar{F}_n(x)) \\
 &=\phi(\psi(\bar{F}_1(x))+\psi(\bar{F}_2(x))+\cdots+\psi(\bar{F}_n(x))). 
 \end{aligned}
 $$
\end{definition}
\begin{definition} Consider two random variables X and Y with CDFs F and G, survival functions $\overline{F}$ and $\overline{G}$, respectively. Then Y is called greater than X in the sense of usual stochastic order ( i.e., $X \preceq_{st} Y$) if $\overline{F}(x) \leq \overline{G}(x)$ for all $x \in \mathbb{R}^+$.
\end{definition}

Now we state some results in the form of lemmas that are useful for the remaining part of the work.
\begin{lemma}  (\citet{marshall2011preservation}, Theorem 5.A.2) Consider the function h, which is increasing, and concave (convex), then $(x_1,\cdots,\,x_n) \preceq^w (\preceq_w) (y_1,\cdots,\,y_n) \implies(h(x_1),\cdots,\,h(x_n)) \preceq^w  (\preceq_w) (h(y_1),\cdots,\,h(y_n)$ for $\bm{x},\,\bm{y} \in\mathbb{R}^n$.
\end{lemma}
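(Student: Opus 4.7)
The plan is to reduce the claim to the Tomić--Weyl characterization of weak majorization: $\bm{x} \preceq^w \bm{y}$ if and only if $\sum_{i=1}^{n} g(x_{i}) \leq \sum_{i=1}^{n} g(y_{i})$ for every continuous decreasing convex $g$, while $\bm{x} \preceq_w \bm{y}$ if and only if the same inequality holds for every continuous increasing convex $g$. Under this rephrasing, establishing $(h(x_1),\ldots,h(x_n)) \preceq^w (h(y_1),\ldots,h(y_n))$ amounts to showing $\sum g(h(x_{i})) \leq \sum g(h(y_{i}))$ for every decreasing convex $g$, and likewise for $\preceq_w$ against every increasing convex $g$; so the work reduces to checking that the compositions $g \circ h$ fall into the correct test class.

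The key intermediate step is therefore a short composition lemma: if $h$ is increasing concave and $g$ is decreasing convex, then $g \circ h$ is decreasing convex; dually, if $h$ is increasing convex and $g$ is increasing convex, then $g \circ h$ is increasing convex. Both facts follow from the standard rules that composition of monotone functions is monotone with the expected sign, and that a convex function composed with a concave (resp.\ convex) inner function is again convex when the outer function is decreasing (resp.\ increasing). Feeding this into the hypothesis then closes the argument in each case: for the concave branch, every admissible test function $g \circ h$ is itself a legitimate test for the weak supermajorization $\bm{x} \preceq^w \bm{y}$, which supplies the required inequality for $h(\bm{x})$ and $h(\bm{y})$; the convex branch is symmetric.

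The main obstacle is bookkeeping rather than analysis: one must pair $\preceq^w$ with the decreasing-convex test class and $\preceq_w$ with the increasing-convex class, and match ``increasing concave $h$'' with the first case and ``increasing convex $h$'' with the second. Dropping monotonicity or swapping the concavity/convexity flavours breaks the implication, so the cleanest way to organise the proof is to write both weak majorization orders in the common Tomić--Weyl template and view the whole argument as a closure-under-composition check on the corresponding function classes; the conclusion for arbitrary $\bm{x},\bm{y} \in \mathbb{R}^n$ then follows by direct quotation of Theorem~5.A.2 of \citet{marshall2011preservation}.
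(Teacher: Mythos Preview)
The paper does not supply its own proof of this lemma; it is quoted verbatim as a preliminary from \citet{marshall2011preservation} without argument. So there is nothing in the paper to compare against.

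Your approach via the Tomi\'c--Weyl characterization is correct and is in fact one of the standard routes to this result. The composition checks are right: for the $\preceq^w$ branch, a decreasing convex $g$ composed with an increasing concave $h$ yields a decreasing convex $g\circ h$, which is an admissible test function for the hypothesis $\bm{x}\preceq^w\bm{y}$; the $\preceq_w$ branch is the symmetric statement with ``increasing convex'' throughout. One small wrinkle: your closing sentence says the conclusion ``follows by direct quotation of Theorem~5.A.2,'' which is circular since that is precisely the statement you are proving. Drop that line---the argument is already complete once the composition lemma has been fed back into the Tomi\'c--Weyl test.
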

\begin{lemma} (\citet{marshall2011preservation}, Theorem 3.A.4) If a function $\Phi: \mathbb{R}^n \rightarrow \mathbb{R}$ is continuously differentiable and permutation-symmetric, then it is Schur-convex (or Schur-concave) on $\mathbb{R}^n$ if and only if $\Phi$ is symmetric on $\mathbb{R}^n$ and satisfies, for all $i \neq j$,
\[
(x_i - x_j) \bigg(\frac{\partial \Phi(x)}{\partial x_i} - \frac{\partial \Phi(x)}{\partial x_j}\bigg) \geq (\leq) 0 \quad \text{for all } x \in \mathbb{R}^n.
\]
\end{lemma}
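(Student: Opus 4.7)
The plan is to prove both directions of the Schur-Ostrowski equivalence through the bridge of T-transforms (Robin Hood transfers) together with the classical theorem of Hardy--Littlewood--P\'olya that majorization is precisely the partial order generated by finitely many such transfers. Throughout, permutation-symmetry of $\Phi$ is used to reduce all verifications to an arbitrary fixed pair of indices.

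For the necessity direction (Schur-convex $\Rightarrow$ the derivative condition), I would fix $i\neq j$ and a point $\bm{x}\in\mathbb{R}^n$ with $x_i>x_j$, and consider the perturbation $\bm{x}(t)=\bm{x}-t(\bm{e}_i-\bm{e}_j)$ for $t\in[0,(x_i-x_j)/2]$. Each $\bm{x}(t)$ arises from $\bm{x}$ by a single T-transform on coordinates $i,j$, so $\bm{x}(t)\prec \bm{x}$, and Schur-convexity gives $\Phi(\bm{x}(t))\le \Phi(\bm{x})$. Dividing by $t$ and letting $t\downarrow 0$ (justified by continuous differentiability) yields
\[
-\frac{\partial \Phi(\bm{x})}{\partial x_i}+\frac{\partial \Phi(\bm{x})}{\partial x_j}\le 0,
\]
which after multiplication by the positive quantity $x_i-x_j$ is exactly the asserted inequality. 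The case $x_i<x_j$ is obtained by swapping the roles of $i$ and $j$, and $x_i=x_j$ is trivial.

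For the sufficiency direction, I would invoke Hardy--Littlewood--P\'olya: $\bm{x}\prec \bm{y}$ iff $\bm{x}$ is obtainable from $\bm{y}$ by finitely many T-transforms. It therefore suffices to show $\Phi$ is non-increasing along every single T-transform. Parametrize a transform acting on coordinates $i,j$ with $y_i>y_j$ as $\bm{y}(s)=\bm{y}-s(\bm{e}_i-\bm{e}_j)$ for $s\in[0,s^\ast]$ with $s^\ast\le (y_i-y_j)/2$, so that $y_i(s)-y_j(s)\ge 0$ throughout. The chain rule gives
\[
\frac{d}{ds}\Phi(\bm{y}(s))= -\Bigl(\tfrac{\partial \Phi(\bm{y}(s))}{\partial x_i}-\tfrac{\partial \Phi(\bm{y}(s))}{\partial x_j}\Bigr),
\]
and since $y_i(s)-y_j(s)\ge 0$, the hypothesis forces the bracket to be non-negative. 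Thus $\Phi(\bm{y}(s))$ is non-increasing on $[0,s^\ast]$, in particular $\Phi(\bm{y}(s^\ast))\le \Phi(\bm{y})$, which chained along the T-transform decomposition gives Schur-convexity. The Schur-concave case follows by applying the argument to $-\Phi$.

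The step I expect to carry the real weight is the appeal to the Hardy--Littlewood--P\'olya decomposition into T-transforms; without it, one has no mechanism to convert the infinitesimal (derivative) condition into the global majorization inequality. A secondary subtlety is handling permutation-symmetry cleanly so that the derivative comparison at an unordered point $\bm{x}$ truly captures every pair $(i,j)$, and making sure the one-parameter families used in both directions stay within the common half-space where the coordinate ordering used in the sign analysis is preserved.
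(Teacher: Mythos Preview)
Your proof is correct and follows the standard textbook route (T-transforms combined with the Hardy--Littlewood--P\'olya decomposition) that one finds, for instance, in Marshall, Olkin and Arnold. Note, however, that the paper does not supply its own proof of this lemma at all: it is simply quoted as a known result, with a citation to \citet{marshall2011preservation}, Theorem~3.A.4. So there is no in-paper argument to compare against; your write-up is a faithful reconstruction of the classical Schur--Ostrowski proof, and nothing more needs to be said.
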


\begin{lemma} (\citet{marshall2011preservation}, Def A.1., ch 3) Consider the function $f: C \subseteq \mathbb{R}^n \to \mathbb{R}$ then, $x\stackrel{m}{\preceq}y \implies f(x) \leq (\geq) f(y)$ if and only if $f$ is a Schur-convex (Schur-concave) function on $\mathbb{R}^n$.
\end{lemma}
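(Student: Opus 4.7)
The plan is to establish the stated equivalence between the monotonicity of $f$ under majorization and its Schur-convexity, treating the symmetry-plus-partial-derivative condition recorded in Lemma 2.2 as the working definition of Schur-convexity. I will split the argument into the two implications. The central classical tool I invoke (and cite rather than reprove) is the Hardy--Littlewood--P\'olya transfer theorem: $x\stackrel{m}{\preceq}y$ if and only if $x$ can be obtained from $y$ by a finite sequence of Robin Hood transfers (T-transforms) $y\mapsto T_\lambda y$, each acting nontrivially on exactly two coordinates. A simple preliminary observation is that in either direction $f$ is automatically permutation-symmetric, since any permutation of a vector both majorizes and is majorized by it.

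For the forward direction (Schur condition implies order-preservation), I parametrize a single T-transform continuously by $\lambda\in[0,1]$, differentiate $f$ along the one-parameter path between $y$ and $T_\lambda y$ on the two affected coordinates $i\neq j$, and observe that $\tfrac{d}{d\lambda}f$ simplifies to a nonpositive multiple of $(y_i-y_j)\bigl(\partial_i f-\partial_j f\bigr)$, whose sign is controlled by Lemma 2.2. Hence $f$ is nonincreasing along each T-transform, and iterating over the finite sequence furnished by the transfer theorem yields $f(x)\le f(y)$ whenever $x\stackrel{m}{\preceq}y$.

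For the reverse direction, starting from order-preservation, I first obtain symmetry as above. Then for any $x$ with $x_i>x_j$ I consider the infinitesimal perturbation $x+\varepsilon(e_j-e_i)$ for small $\varepsilon>0$; this vector is strictly majorized by $x$, so the hypothesis gives $f(x+\varepsilon(e_j-e_i))\le f(x)$. Dividing by $\varepsilon$ and letting $\varepsilon\to 0^+$ yields $\partial_j f(x)-\partial_i f(x)\le 0$, equivalently $(x_i-x_j)\bigl(\partial_i f(x)-\partial_j f(x)\bigr)\ge 0$, matching the condition in Lemma 2.2 and hence certifying Schur-convexity. The Schur-concave case follows by applying the same argument to $-f$.

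The main obstacle is the Hardy--Littlewood--P\'olya transfer decomposition of majorization itself, whose proof is nontrivial and rests on Birkhoff's theorem on the extreme points of doubly stochastic matrices; I would treat this as a black-box classical result consistent with the paper's blanket reliance on \citet{marshall2011preservation}. A secondary technical point is verifying in the reverse direction that $x+\varepsilon(e_j-e_i)$ is genuinely majorized by $x$ for $\varepsilon$ small enough not to reverse the order of components, and that the assumed continuous differentiability lets us promote the infinitesimal inequality to the pointwise partial-derivative statement required.
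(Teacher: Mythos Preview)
The paper offers no proof of this lemma because, as the citation ``Def~A.1., ch~3'' indicates, it is the \emph{definition} of Schur-convexity in \citet{marshall2011preservation}: a function is called Schur-convex precisely when it preserves majorization order. There is nothing to prove from the paper's standpoint.

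Your proposal inverts the logical roles of Lemmas~2.2 and~2.3: you take the partial-derivative criterion of Lemma~2.2 as the definition and then argue that it is equivalent to majorization-monotonicity. What you have actually sketched is therefore a proof of Lemma~2.2 (the Schur--Ostrowski theorem), not of Lemma~2.3. The argument you outline for that purpose is essentially the standard one and is sound as far as it goes, but note a genuine scope mismatch: Lemma~2.2 presupposes continuous differentiability, whereas Lemma~2.3 is stated for an arbitrary $f:C\to\mathbb{R}$. Your route can only deliver the equivalence on the subclass of $C^1$ functions, so even if one accepted your choice of definition, the proof would not cover the lemma as stated. If you wish to keep the paper's conventions, the correct response here is simply to observe that the statement is definitional and requires no proof; the substantive content lies in Lemma~2.2.
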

\begin{lemma} (\citet{marshall2011preservation}, Theorem 3.A.8) Consider the function $f : C \subseteq \mathbb{R}^n \rightarrow \mathbb{R}$ then, $x \preceq^w (\preceq_w) y \implies f(x) \leq f(y)$ if and only if $f$ is decreasing (increasing) and Schur-convex function on $\mathbb{R}^n$.
\end{lemma}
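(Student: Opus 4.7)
The strategy is to reduce weak (super/sub)-majorization to a composition of a componentwise inequality with ordinary majorization, so that Lemma 2.5 (the majorization characterization of Schur-convexity) together with monotonicity handles each implication. The key external input is the classical decomposition (Marshall and Olkin, 2011, Ch.~5), in which the vector inequalities $\geq, \leq$ are componentwise:
\begin{equation*}
x \preceq^w y \iff \exists\, z \in \mathbb{R}^n \text{ with } x \geq z \text{ and } z \stackrel{m}{\preceq} y,
\end{equation*}
\begin{equation*}
x \preceq_w y \iff \exists\, z \in \mathbb{R}^n \text{ with } x \leq z \text{ and } z \stackrel{m}{\preceq} y.
\end{equation*}

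For the sufficiency direction in the $\preceq^w$ case (the $\preceq_w$ case is dual), assume $f$ is decreasing and Schur-convex and take $x \preceq^w y$. Extracting $z$ from the decomposition, monotonicity of $f$ yields $f(x) \leq f(z)$, while Schur-convexity combined with $z \stackrel{m}{\preceq} y$ yields $f(z) \leq f(y)$ via Lemma 2.5; chaining gives $f(x) \leq f(y)$. In the $\preceq_w$ case, the intermediate $z$ satisfies $x \leq z$, and $f$ increasing together with Schur-convex gives the analogous chain.

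For the necessity direction, assume the stated implication. Ordinary majorization $x \stackrel{m}{\preceq} y$ is a special case of $x \preceq^w y$ (and of $x \preceq_w y$), so the hypothesis together with Lemma 2.5 immediately forces $f$ to be Schur-convex. To extract the correct monotonicity, observe that if $u \leq v$ componentwise then sorting in increasing order preserves the entrywise inequality, so $\sum_{i=1}^k v_{(i)} \geq \sum_{i=1}^k u_{(i)}$ for every $k$; this is precisely $v \preceq^w u$. By hypothesis $f(v) \leq f(u)$, which is the desired decreasing property. The $\preceq_w$ version is symmetric: the same componentwise inequality reads $u \preceq_w v$ using partial sums from the top, so the hypothesis gives $f(u) \leq f(v)$, i.e., $f$ is increasing.

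The main obstacle is the decomposition lemma itself: constructing the intermediate vector $z$ requires a careful rebalancing procedure that absorbs the slack in the weak-majorization partial-sum inequalities until only ordinary majorization remains. This construction is non-trivial but is a standard classical result (Marshall and Olkin, 2011, Ch.~5); once it is invoked, the rest of the proof consists only of two short chaining arguments paired with two easy specializations.
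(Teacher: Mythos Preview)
The paper does not actually prove this statement: it is quoted as a preliminary fact from \citet{marshall2011preservation} (their Theorem~3.A.8) and is used without proof throughout Sections~3 and~4. So there is no ``paper's own proof'' to compare against; your proposal goes beyond what the manuscript supplies.

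That said, your sketch is correct and is essentially the standard argument given in Marshall--Olkin. The sufficiency direction via the intermediate-vector decomposition and the necessity direction via specializing to ordinary majorization and to componentwise order are exactly the classical route. Two small points: (a) when you invoke ``Lemma~2.5'' for the majorization characterization of Schur-convexity, in this paper's numbering that is Lemma~2.3, not Lemma~2.5; (b) your claim that $u \leq v$ componentwise implies $v \preceq^w u$ is right, but the one-line justification (``sorting in increasing order preserves the entrywise inequality'') is itself a nontrivial rearrangement fact; it holds, but if you were writing this out in full you would want to cite it rather than assert it.
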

\begin{lemma} (\citet{marshall2011preservation}, Theorem A.8) Consider the function $f: C \subseteq \mathbb{R}^n \rightarrow \mathbb{R}$ then, for $x, y \in C$,  $x \preceq_w y$ $\implies  f(x) \leq(\geq) f(y)$ if and only if $f$ is both increasing  (decreasing) and Schur-convex (Schur-concave) function on $C\subseteq \mathbb{R}^n$.
\end{lemma}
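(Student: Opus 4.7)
The plan is to prove the biconditional, treating the parenthetical alternative (namely $x \preceq_w y \Rightarrow f(x) \geq f(y)$ iff $f$ is decreasing and Schur-concave) as a direct reduction: replacing $f$ by $-f$ converts ``$\geq$'' into ``$\leq$'', ``decreasing'' into ``increasing'', and ``Schur-concave'' into ``Schur-convex.'' Hence it is enough to establish
\[
x \preceq_w y \implies f(x) \leq f(y) \qquad \Longleftrightarrow \qquad f \text{ is increasing and Schur-convex on } C.
\]

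For the forward implication (necessity), I would specialize the hypothesis to two easy classes of pairs $(x,y)$ for which $x \preceq_w y$ is automatic. First, if $x \leq y$ componentwise, then the decreasing rearrangements satisfy $x_{[i]} \leq y_{[i]}$ coordinatewise and thus the weak-submajorization partial-sum inequalities, so $x \preceq_w y$ and therefore $f(x) \leq f(y)$; this gives increasingness of $f$. Second, if $x \stackrel{m}{\preceq} y$ (strong majorization), then in particular $x \preceq_w y$, so $f(x) \leq f(y)$, which by the characterization recalled in Lemma 3 above is exactly Schur-convexity of $f$.

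For the backward implication (sufficiency), I would invoke the classical intermediate-vector theorem for weak submajorization (Marshall--Olkin--Arnold): $x \preceq_w y$ if and only if there exists $u \in \mathbb{R}^n$ with $x \leq u$ componentwise and $u \stackrel{m}{\preceq} y$ in the strong sense. Once such a $u$ is produced, monotonicity of $f$ gives $f(x) \leq f(u)$, Schur-convexity of $f$ gives $f(u) \leq f(y)$, and transitivity yields the desired $f(x) \leq f(y)$.

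The main obstacle is the intermediate-vector theorem itself, which is the non-routine step; the standard proof constructs $u$ from $y$ by decreasing (in sorted order) the smallest entries of $y$ by exactly the right amounts to absorb the weak-submajorization gap while preserving componentwise dominance by $x$, and a careful bookkeeping argument shows the resulting $u$ still majorizes $y$ from below and dominates $x$ from above. Since this is a well-documented fact in Marshall--Olkin--Arnold, I would cite it rather than reproduce the construction, and then conclude as above.
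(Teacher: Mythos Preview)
The paper does not prove this lemma; it is stated with a citation to Marshall--Olkin--Arnold and used as a black box. Your sketch is correct and is essentially the standard argument from that reference: necessity by specializing $x\preceq_w y$ to componentwise order (yielding monotonicity) and to majorization (yielding Schur-convexity), and sufficiency via the intermediate-vector characterization $x\preceq_w y \iff \exists\,u$ with $x\le u$ and $u\stackrel{m}{\preceq} y$, then chaining monotonicity and Schur-convexity.

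One minor point: your verbal description of how $u$ is constructed (``decreasing the smallest entries of $y$'') actually describes the companion version producing $v$ with $x\stackrel{m}{\preceq} v$ and $v\le y$, not the version you stated and used; since you plan to cite the result rather than reproduce the construction, this does not affect the validity of your argument. A second caveat worth flagging is that for a general domain $C\subseteq\mathbb{R}^n$ the intermediate vector $u$ need not lie in $C$, so strictly speaking the sufficiency step uses Schur-convexity and monotonicity at $u$; the cited theorem handles this by assuming $C$ has the appropriate closure properties, which you may want to mention if you write this out in full.
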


\begin{lemma}(\citet{li2015ordering}, Lemma A.1 and \citet{nelsen2006archimedean}, Theorem 4.4.2)
Let $C_1$ and $C_2$ be two $n$-dimentional Archimedean copulas with generators $\phi_1$ and $\phi_2$ (with pseudo-inverses $\psi_1$ and $\psi_2$) respectively, if $\psi_2 \circ \phi_1$ is a super-additive (sub-additive) function, then $C_1(\bf{u})\leq(\geq) C_2(\bf{u})$ for all $\bf{u} =(u_1,u_2, \cdots, u_n) \in [0,1]^n$. That is, $$ \phi_1\bigg(\sum_{i=1}^{n}\psi_1(u_i)\bigg) \leq(\geq) \phi_2\bigg(\sum_{i=1}^{n}\psi_2(u_i)\bigg). $$
\end{lemma}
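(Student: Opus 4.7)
The plan is to reduce the claimed copula inequality to the functional inequality obtained from super-additivity (sub-additivity) of $\psi_2\circ\phi_1$ by a clean change of variables. First I would fix $\mathbf{u}=(u_1,\dots,u_n)\in[0,1]^n$ and set $t_i=\psi_1(u_i)\in[0,\infty]$. Since $\phi_1$ is continuous with $\phi_1(0)=1,\ \phi_1(\infty)=0$ and $\psi_1=\phi_1^{-1}$, one has $u_i=\phi_1(t_i)$, and consequently $\psi_2(u_i)=\psi_2(\phi_1(t_i))=(\psi_2\circ\phi_1)(t_i)$. This substitution turns the target inequality
\[
\phi_1\!\Bigl(\sum_{i=1}^n\psi_1(u_i)\Bigr)\;\le\;\phi_2\!\Bigl(\sum_{i=1}^n\psi_2(u_i)\Bigr)
\]
into an inequality of the form
\[
\phi_1\!\Bigl(\sum_{i=1}^n t_i\Bigr)\;\le\;\phi_2\!\Bigl(\sum_{i=1}^n(\psi_2\circ\phi_1)(t_i)\Bigr).
\]

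Next I would invoke the super-additivity hypothesis on $h:=\psi_2\circ\phi_1$. By definition $h(s+t)\ge h(s)+h(t)$ for $s,t\ge 0$, and a straightforward induction on $n$ yields
\[
h\!\Bigl(\sum_{i=1}^n t_i\Bigr)\;\ge\;\sum_{i=1}^n h(t_i).
\]
Since $\phi_2$ is non-increasing, applying $\phi_2$ to both sides reverses the inequality:
\[
\phi_2\!\Bigl(h\!\bigl(\textstyle\sum_i t_i\bigr)\Bigr)\;\le\;\phi_2\!\Bigl(\sum_{i=1}^n h(t_i)\Bigr).
\]

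Finally, I would simplify the left-hand side using $\phi_2\circ\psi_2=\mathrm{id}$ (which holds on $[0,1]$ where the relevant arguments live, noting that $\phi_1(\sum_i t_i)\in[0,1]$): the inner composition collapses as $\phi_2\circ\psi_2\circ\phi_1=\phi_1$, giving exactly $\phi_1(\sum_i t_i)$ on the left. Undoing the substitution $t_i=\psi_1(u_i)$ produces the desired inequality. The sub-additive case is symmetric: the induction gives the reversed inequality $h(\sum t_i)\le\sum h(t_i)$, and applying the non-increasing $\phi_2$ yields the opposite direction of the copula inequality.

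The argument is essentially bookkeeping with compositions of monotone maps; the only mildly delicate point will be verifying that pseudo-inverses behave as true inverses on the ranges being used, i.e.\ that $\phi_2(\psi_2(v))=v$ for $v\in[0,1]$ so that the collapse $\phi_2\circ\psi_2\circ\phi_1=\phi_1$ is legitimate, and that the extended-real values $t_i=\infty$ (arising when $u_i=0$) do not cause issues—these are handled by the continuity conventions $\phi_k(\infty)=0$ built into Definition 2.1.
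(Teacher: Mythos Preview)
Your argument is correct and is the standard proof of this well-known result: substitute $t_i=\psi_1(u_i)$, apply the (inductive) super-additivity inequality for $h=\psi_2\circ\phi_1$, then use that $\phi_2$ is non-increasing and that $\phi_2\circ\psi_2$ acts as the identity on $[0,1]$. The paper itself does not supply a proof of this lemma at all; it is quoted from \citet{li2015ordering} (Lemma~A.1) as a preliminary tool and is used without further justification in the proofs of Theorems~3.1--4.4 (e.g.\ to obtain inequalities~\eqref{3.1.1}, \eqref{3.3.1}, \eqref{4.1.1}, etc.). So there is no ``paper's own proof'' to compare against beyond noting that your derivation is exactly the argument that underlies the cited reference.
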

\section{Main results for the super-additive case}
We state the first result in the form of a proposition which generalizes Theorem 3.4 of \citet{amini2024comparison} in the case when systems possess distinct dependency structures. It compares the reliability of two parallel systems when components' survival functions are increasing and log-concave in parameters of the distributions, components are subject to random shocks, and the composition function $\psi_2 \circ \phi_1$ is a super-additive function.
\begin{proposition} 
 Consider two dependent and heterogenous random vectors $X^p = (X_1I_{p_1}, \ldots, X_nI_{p_n})$ and $Y^p = (Y_1I_{p_1}, \ldots, Y_nI_{p_n}) $, where $X_i\sim F(x, \alpha_i)$, $Y_i\sim F(x, \beta_i)$ and $I_{p_i}\sim Bernoulli(p_i)$ are independent of $X_i$'s and $Y_i$'s for $i\in \{1,\cdots, n\}$. Further, consider that $X^p$ and $Y^p$ possess Archimedean copulas generated by $\phi_1$ and $\phi_2$ respectively and $\textbf{p} = (p_1, \ldots, p_n)$. Then $Y_{n:n}^p \preceq_{st} X_{n:n}^p$ if 

 $(i)$ $\bm{\alpha} \preceq^m \bm{\beta}$, $\psi_2\circ \phi_1$ is super-additive and $\bm{\alpha},\, \bm{\beta},\, \bm{p} \in \mathbb{D^+}.$
 
 $(ii)$ $ t\psi_1^{\prime}(1-t)$ or $ t\psi_2^{\prime}(1-t)$ is increasing in \(t\).

 $(iii)$ $\overline{F}(x,\theta)$ be increasing and log-concave function of the parameter $\theta$.
 
\end{proposition}

\begin{proof} The CDF of the random variable $Y_{n:n}^p$ is 
\[F_{Y_{n:n}^p}(x) = \phi_2\left(\sum_{j=1}^{n} \psi_2\left(1 - p_j \overline{F}(x, \beta_j)\right)\right).\] 
Using super-additivity of $\psi_2\circ \phi_1$ and Lemma 2.6 (originally Lemma A.1. of \citet{li2015ordering}), we get that \begin{equation}\phi_2\left(\sum_{j=1}^{n} \psi_2\left(1 - p_j \overline{F}(x, \alpha_j)\right)\right) \geq \phi_1\left(\sum_{j=1}^{n}   \psi_1\left(1 - p_j \overline{F}(x, \alpha_j)\right)\right). \label{3.1.1} \end{equation}
Now using Theorem 3.4 from \citet{amini2024comparison} along with assuming $\phi_2$ as the generator of underlying Archimedean copulas, we can deduce that $F_{Y_{n;n}^p, \beta}(x)\ge F_{Y_{n;n}^p, \alpha}(x) $ that is,  \begin{equation}\phi_2\left(\sum_{j=1}^{n} \psi_2\left(1 - p_j \overline{F}(x, \beta_j)\right)\right) \geq \phi_2\left(\sum_{j=1}^{n} \psi_2\left(1 - p_j \overline{F}(x, \alpha_j)\right)\right).\label{3.1.2} \end{equation} 
Combining \eqref{3.1.1} and \eqref{3.1.2} we get $\phi_2\left(\sum_{j=1}^{n} \psi_2\left(1 - p_j \overline{F}(x, \beta_j)\right)\right) \ge \phi_1\left(\sum_{j=1}^{n} \psi_1\left(1 - p_j \overline{F}(x, \alpha_j)\right)\right)$. That is $F_{Y_{n:n}^p}(x)\ge F_{X_{n:n}^p}(x)$.
\end{proof} 

\begin{remark} Proposition 3.1. removes the limitation that both the parallel systems possess the same dependency structures in Theorem 3.4 of \citet{amini2024comparison}, as it is quite natural that the systems possess distinct dependency structures. It provides insight into the fact that when the survival functions of the components are increasing and log-concave, systems possess different dependency structures, and components are experiencing random shocks with a certain probability of surviving, we can establish a stochastic ordering of the parallel system's lifetimes. It is important to note that there are many copula generators and survival functions satisfying conditions in Proposition 3.1. A few examples are provided below.\newline\newline
(i) Regarding conditions (i) and (ii) of Proposition 3.1, consider $\phi_1$ and $\phi_2$ be two generators of Clayton copula defined by $\phi_i(x)=(1+\theta_i x)^{\frac{1}{\theta_i}}$ and inverses $\psi_i(x)=\frac{1}{\theta_i}(t^{-\theta_i}-1)$ with parameters $\theta_1$ and $\theta_2$ respectively, then for $\theta_1, \theta_2 \in (0,\, \infty)$ we have, $$(\psi_2 \circ \phi_1)^{\prime\prime}(x)=(\theta_2-\theta_1)(1+\theta_1 x)^{\frac{\theta_2}{\theta_1}-2}$$ this implies $\psi_2 \circ \phi_1$  is a convex function and hence a super-additive function for $\theta_2\geq \theta_1>0$ and $x\psi_i^{\prime}(1-x)=\frac{x}{(1-x)^{\theta_i +1}}$ for $i\in \{1,\,2\}$ are increasing functions of $x\in (0,\,1)$. For another example, consider two AMH copulas with parameters $\theta_1,\,\theta_2 \in [0\,\,1)$ and $0\leq \theta_1 < \theta_2$, then both the condition (i) and (ii) are satisfied [see \citet{zhang2019stochastic} Remark 3.5 for reference].\newline \newline
(ii) Regarding condition (iii) on the survival functions of the components, we can find many survival functions that are increasing and log-concave simultaneously in parameters. For example, exponential distribution with mean $\lambda$ and survival function $\bar{F}(x,\,\lambda)=e^{-\frac{x}{\lambda}}$, log-logistic distribution with $\bar{F}(x,\,\alpha,\,\beta)=\frac{\alpha^\beta}{\alpha^\beta+x^\beta}$ (increasing and log-concave w.r.t scale parameter $\alpha>0$), parallel systems when components are from proportional reversed hazard model, etc.
\end{remark}
\begin{remark}
  In particular when, $\phi_1=\phi_2$, then $\psi_2 \circ \phi_1(x)=x$ is a super-additive function and then above proposition 3.1. becomes the Theorem 3.4 of \citet{amini2024comparison}. Therefore Theorem 3.4 of \citet{amini2024comparison} is a particular case of Proposition 3.1. when the dependency structure among the components of two parallel systems $X^p$ and $Y^p$ are described by the same Archimedean copulas with generator $\phi_1=\phi_2$. \newline \newline
(i) It can be proved that the majorization condition $\alpha \preceq^m \beta$ in Proposition 3.1. can be more generalized to the super-majorization $\alpha \preceq^w \beta$ with the conclusion of the proposition being preserved. therefore Proposition 3.1. is true in more general parallel systems where parameter vectors are showing super-majorization order.
\end{remark} 
\begin{example} Consider random vectors $X^p =(X_1I_{p_1}, X_2I_{p_2}, X_3I_{p_3})$ and $Y^p = (Y_1I_{p_1}, X_2I_{p_2}, Y_3I_{p_3})$ where $X_i \sim F(x,\,\alpha_i)=1-e^{-\frac{x}{\alpha_i}}$ and $Y_i \sim F(x,\,\beta_i)=1-e^{-\frac{x}{\beta_i}}$ where, $\bm{\alpha}=(10,\,3,\,1)$, $\bm{\beta}=(11,\,2,\,1)$ and
$\bm{p}=(0.8,\, 0.3,\, 0.2) \in \mathbb{D^+}$. Then it is easy to observe that $\bar{F}(x,\,\alpha_i)$ is increasing in $\alpha_i>0$ and $\log \bar{F}(x,\,\alpha_i)=-\frac{x}{\alpha_i}$ is a concave function of $\alpha_i$.\newline 

Now consider the multivariate AMH copulas generated by $\phi_1(x)=\frac{1-\theta_1}{e^x-\theta_1}$ and $\phi_2(x)=\frac{1-\theta_2}{e^x-\theta_2}$ so that $\psi_2(x)=\log\left(\frac{1-\theta_2+\theta_2 x}{x}\right)$ then we get, $x\psi_2^{\prime}(1-x)=\frac{x(1-\theta_2)}{(1-x)(1-\theta_2 x)}$ is an increasing function of $x\in(0,\,1)$ and $\psi_2 \circ \phi_1(x)= \log\left(\frac{1-\theta_2}{1-\theta_1}(e^x-\theta_1)+\theta_2\right) $. Some straightforward manipulations gives $(\psi_2 \circ \phi_1)^{\prime}(x)=\left(\frac{\theta_2-\theta_1}{1-\theta_2}e^{-x}+1\right)^{-1}$ which is an increasing function for $x\in [0,\,1]$ and  $0\leq \theta_1<\theta_2<1$, implies it is a convex function with $\psi_2 \circ \phi_1(0)=0$. Hence $\psi_2 \circ \phi_1$ is a super-additive function, see Remark 3.5 of \citet{zhang2019stochastic} for reference. 

The graph of $F_{Y_{n:n}^p}(x)-F_{X_{n:n}^p}(x)$ is shown in \textbf{Fig.1} for $\theta_1=0.2$ and $ \theta_2=0.75$ to illustrate it is a non-negative function of $x>0$.

\begin{figure}[ht]
    \centering
    \includegraphics[width=0.7\linewidth]{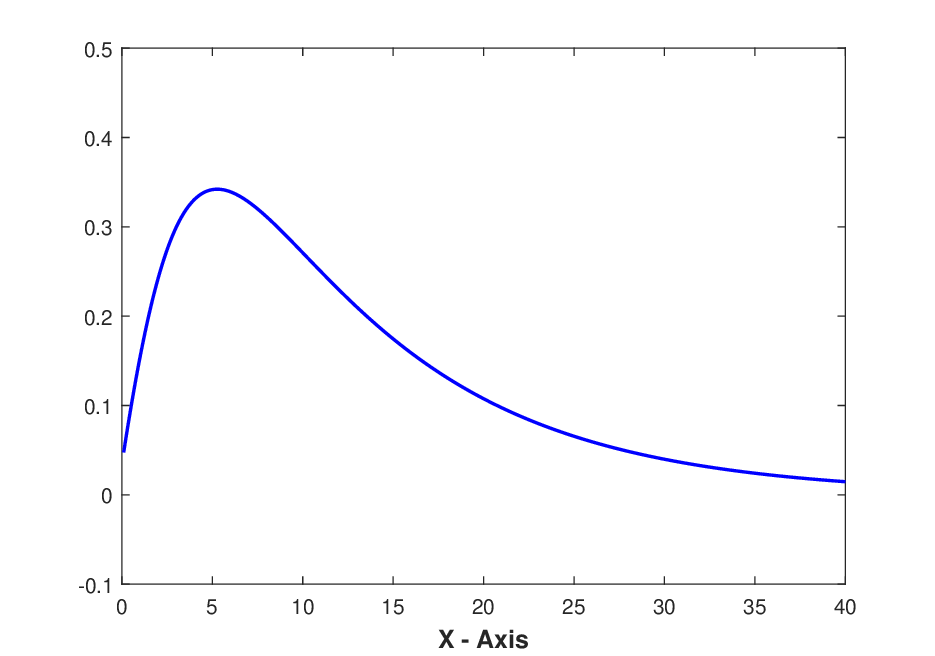}
    \caption{Graph of $F_{Y_{n:n}^p}(x)-F_{X_{n:n}^p}(x)$ for $\theta_1=0.2$ and $ \theta_2=0.75$.}
    \label{fig:1}
\end{figure}
\end{example}
Now a natural question arises that if in Proposition 3.1., the components' survival functions are decreasing and log-convex instead of increasing and log-concave in parameters, besides, $ t\psi_1^{\prime}(1-t)$ or $ t\psi_2^{\prime}(1-t)$ is decreasing instead of increasing in \(t\ge 0\), then can we have the stochastic ordering between two parallel systems having different dependency structures? The answer to this question is affirmative. The theorem presented below is regarding the usual stochastic ordering between two parallel systems when the survival functions  $\overline{F}(x,\theta)$ of the components are decreasing and the log-convex function of the parameter $\theta$, $ t\psi_1^{\prime}(1-t)$ or $ t\psi_2^{\prime}(1-t)$ is decreasing in \(t\), components may experience random shock and parallel systems may show different dependency structures with $\psi_2 \circ \phi_1$ is a super-additive function.
\begin{theorem}
 Consider two dependent and heterogeneous random vectors $X^p = (X_1I_{p_1}, \ldots, X_nI_{p_n})$ and $Y^p = (Y_1I_{p_1}, \ldots, Y_nI_{p_n}) $, where $X_i\sim F(x, \alpha_i)$, $Y_i\sim F(x, \beta_i)$ and $I_{p_i}\sim Bernoulli(p_i)$ are independent of $X_i$'s and $Y_i$'s for $i\in \{1,\cdots, n\}$. Further, consider that $X^p$ and $Y^p$ possess Archimedean copulas generated by $\phi_1$ and $\phi_2$ respectively and $\textbf{p} = (p_1, \ldots, p_n)$.  Then $Y_{n:n}^p \preceq_{st} X_{n:n}^p$ if 

 $(i)$ $ \bm{\beta} \stackrel{m}{\preceq} \bm{\alpha}$, $\psi_2\circ \phi_1$ is super-additive and $\bm{\alpha},\, \bm{\beta} \in \mathbb{D^+},\,\bm{p} \in\mathbb{I^+}. $
 
 $(ii)$ $ t\psi_1^{\prime}(1-t)$ or $ t\psi_2^{\prime}(1-t)$ is decreasing in \(t\).

 $(iii)$ $\overline{F}(x,\theta)$ be decreasing and log-convex function of the parameter $\theta$.
 
\end{theorem}
\begin{proof} The CDF of the random variable $Y_{n:n}^p$ is 
\[F_{Y_{n:n}^p}(x) = \phi_2\left(\sum_{j=1}^{n} \psi_2\left(1 - p_j \overline{F}(x, \beta_j)\right)\right).\]

Using super-additivity of $\psi_2\circ \phi_1$ and Lemma 2.6 we get that \begin{equation}\phi_2\left(\sum_{j=1}^{n} \psi_2\left(1 - p_j \overline{F}(x, \alpha_j)\right)\right) \geq \phi_1\left(\sum_{j=1}^{n}   \psi_1\left(1 - p_j \overline{F}(x, \alpha_j)\right)\right). \label{3.2.1} \end{equation}
  Now we will prove that   \begin{equation}\phi_2\left(\sum_{j=1}^{n} \psi_2\left(1 - p_j \overline{F}(x, \beta_j)\right)\right) \geq \phi_2\left(\sum_{j=1}^{n} \psi_2\left(1 - p_j \overline{F}(x, \alpha_j)\right)\right).\label{3.2.2} \end{equation} 
  For this, we will consider the function $L$ defined as  
 $ L(\bm{\beta}) = \phi_2\left(\sum_{j=1}^{n} \psi_2\left(1 - p_j \overline{F}(x, \beta_j)\right)\right)$.\newline
 Now we have,  $$ \frac{\partial L(\bm{\beta})}{\partial \beta_j}=-\phi_2^{\prime}\left(\sum_{j=1}^{n} \psi_2\left(1 - p_j \overline{F}(x, \beta_j)\right)\right)\cdot \psi_2^{\prime}\left(1 - p_j \overline{F}(x, \beta_j)\right) \cdot p_j \frac{\partial \overline{F}(x, \beta_j)}{\partial \beta_j}.$$
We will shows that $L$ is a Schur-concave function. Now consider,
\newline

$\frac{\partial L(\bm{\beta})}{\partial \beta_i} - \frac{\partial L(\bm{\beta})}{\partial \beta_j}=$\newline
$-\phi_2^{\prime}\left(\sum_{j=1}^{n} \psi_2\left(1 - p_j \overline{F}(x, \beta_j)\right)\right)\bigg( \psi_2^{\prime}\left(1 - p_i \overline{F}(x, \beta_i)\right) \cdot p_i \frac{\partial \overline{F}(x, \beta_i)}{\partial \beta_i} - \psi_2^{\prime}\left(1 - p_j \overline{F}(x, \beta_j)\right) \cdot p_j \frac{\partial \overline{F}(x, \beta_j)}{\partial \beta_j}\bigg)$. \newline

$\stackrel{sign}{=}\bigg( \psi_2^{\prime}\left(1 - p_i \overline{F}(x, \beta_i)\right) \cdot p_i\overline{F}(x, \beta_i) \frac{\frac{\partial \overline{F}(x, \beta_i)}{\partial \beta_i}}{\overline{F}(x, \beta_i)} - \psi_2^{\prime}\left(1 - p_j \overline{F}(x, \beta_j)\right) \cdot p_j \overline{F}(x, \beta_j)\frac{\frac{\partial \overline{F}(x, \beta_j)}{\partial \beta_j}}{\overline{F}(x, \beta_j)}\bigg)$. \newline

Without loss of generality suppose that $1\le i\le j\le n$. Note that $p_i\le p_j$ and $\beta_i\geq \beta_j$ since $\bm{p} \in \mathbb{I^+},\,\,\bm{\beta} \in \mathbb{D^+}$. Now using  $p_i \overline{F}(x, \beta_i) \le p_j\overline{F}(x, \beta_j)$, decreasing property of $ t\psi_2^{\prime}(1-t)$ and log-convex property of $\overline{F}(x, \theta)$ in $\theta$, we get, \newline 
$(\beta_i-\beta_j)\bigg(\frac{\partial L(\bm{\beta})}{\partial \beta_i} - \frac{\partial L(\bm{\beta})}{\partial \beta_j}\bigg)$ $$\stackrel{sign}{=}(\beta_i-\beta_j)
\bigg( \psi_2^{\prime}\left(1 - p_i \overline{F}(x, \beta_i)\right) \cdot p_i\overline{F}(x, \beta_i) \frac{\frac{\partial \overline{F}(x, \beta_i)}{\partial \beta_i}}{\overline{F}(x, \beta_i)} - \psi_2^{\prime}\left(1 - p_j \overline{F}(x, \beta_j)\right) \cdot p_j \overline{F}(x, \beta_j)\frac{\frac{\partial \overline{F}(x, \beta_j)}{\partial \beta_j}}{\overline{F}(x, \beta_j)}\bigg)\le 0.$$
$Lemma\,2.2.$ implies that $L$ is Schur-concave function. Further, using the Schur-concave property of $L$ and $Lemma\, 2.3.$, $\bm{\beta} \stackrel{m}{\preceq} \bm{\alpha}\implies L(\bm{\alpha}) \le L(\bm{\beta})$, which is inequality \eqref{3.2.2}. Combining \eqref{3.2.1} and \eqref{3.2.2} we get $\phi_2\left(\sum_{j=1}^{n} \psi_2\left(1 - p_j \overline{F}(x, \beta_j)\right)\right) \ge \phi_1\left(\sum_{j=1}^{n} \psi_1\left(1 - p_j \overline{F}(x, \alpha_j)\right)\right)$. That is $F_{Y_{n:n}^p}(x)\ge F_{X_{n:n}^p}(x)$.
\end{proof}
\begin{remark} As the conditions in our Theorem 3.2. and in Proposition 3.1. has a significant difference, it enables us to make stochastic ordering between two parallel systems in different situations than those specified in Proposition 3.1. Moreover, Theorem 3.2 provides insight into the fact that when the survival functions of the components are decreasing and log-convex, systems possess different dependency structures with $\psi_2 \circ \phi_1$ is a super-additive function and components are experiencing random shocks, we can establish a stochastic ordering of the parallel systems' lifetimes. Some examples of Archimedean generators satisfying the conditions of $Theorem \, 3.2.$ are as follows.\newline

(i) consider the Gumbel copula with Archimedean generators defined by $\phi_i(x)=e^{-x^{\frac{1}{\theta_i}}}$ where $\theta_i \in[1,\, \infty)$ for $ i=1,2$. Then $\psi_i(x) = (-\log(x))^{\theta_i}$ and $\psi_2 \circ \phi_1(x)= x^{\frac{\theta_2}{\theta_1}}$ which is a super-additive function if $1\leq \theta_1 \leq \theta_2$. Further, $x\psi_i^{\prime}(1-x)=\frac{\theta_ix(-\log(1-x))^{\theta_i-1}}{x-1}$ is a non-increasing function of $x\in (0,\,1)$ for $\theta_i \in [1,\,\infty)$.\newline

(ii) Consider Gumbel-barnett copulas with Archimedean generators $\phi_i(x)=e^{\frac{1}{\theta_i}(1-e^x)}$ where $\theta_i \in (0,\,1]$ for $i=1,2$. Then $\psi_i(x)=\log(1-\theta_i\log(x))$ and $\psi_2 \circ \phi_1(x)=\log(1-\frac{\theta_2}{\theta_1}(1-e^x))$. Differentiating two times we have, $$(\psi_2 \circ \phi_1)^{\prime\prime}(x)=\frac{e^{-x}(1-\frac{\theta_2}{\theta_1})}{(\frac{\theta_2}{\theta_1}(e^{-x}-1)-e^{-x})^2}\geq0,\quad if \quad 0<\theta_2\leq\theta_1<1.$$ Therefore $\psi_2 \circ \phi_1$ is a convex function, hence a super-additive function for $0<\theta_2\leq\theta_1<1$. Further, $x\psi_i^{\prime}(1-x)=\frac{x\theta_i}{(1-x)(\theta_i\log(1-x)-1)}$ is decreasing function for $x\in(0,\,1)$, where $\theta_i \in (0,\,1]$.\newline

(iii) It's worth mentioning that survival functions  $\overline{F}(x,\,\alpha_i)$ of the components is decreasing and log-convex function in parameters $\alpha_i$ in Theorem 3.2., is very common in practice and such examples of survival functions can be found easily. Some examples of such survival functions are Weibull distribution (in scale parameter $\alpha>0$) with survival functions $\bar{F}(x,\,\alpha,\,\beta)=e^{-\alpha x^{\beta}}$, exponential distribution with mean $\frac{1}{\lambda}$, components following proportional hazard model, additive hazard model etc.\newline \end{remark}

 \begin{example} Condsider random vectors $X^p =(X_1I_{p_1}, X_2I_{p_2}, X_3I_{p_3})$ and $Y^p = (Y_1I_{p_1}, X_2I_{p_2}, Y_3I_{p_3})$ where $X_i \sim F(x,\,\alpha_i)=1-e^{-\alpha_i x^2}$ and $Y_i \sim F(x,\,\beta_i)=1-e^{-\beta_i x^2}$ (Weibull distribution with shape parameter $k=2$) where, $\bm{\alpha}=(8,\,5,\,1)$, $\bm{\beta}=(7,\,4,\,3) \in \mathbb{D^+}$ and $\bm{p}=(0.1,\, 0.4,\, 0.9) \in \mathbb{I^+}$. Then observe that $\bar{F}(x,\,\alpha_i)$ is decreasing in $\alpha_i>0$ and $\log \bar{F}(x,\,\alpha_i)=-\alpha_i x^2$ is a convex function of $\alpha_i$.\newline  

Now consider the multivariate Gumbel-Barnett copulas generated by  $\phi_i(x)=e^{\frac{1}{\theta_i}(1-e^x)}$ where $\theta_i \in (0,\,1]$ for $i=1,2$ in the $Remark\,\,3.3.(ii)$. Then all the conditions of  $Theorem\,3.2.$ are satisfied for $0<\theta_2\leq\theta_1<1$.\newline

The graph of $F_{Y_{n:n}^p}(x)-F_{X_{n:n}^p}(x)$ is shown in \textbf{Fig.2} for $\theta_1=0.6$ and $ \theta_2=0.3$ to illustrate it is a non-negative function of $x>0$. 

\begin{figure}[ht]
    \centering
    \includegraphics[width=0.7\linewidth]{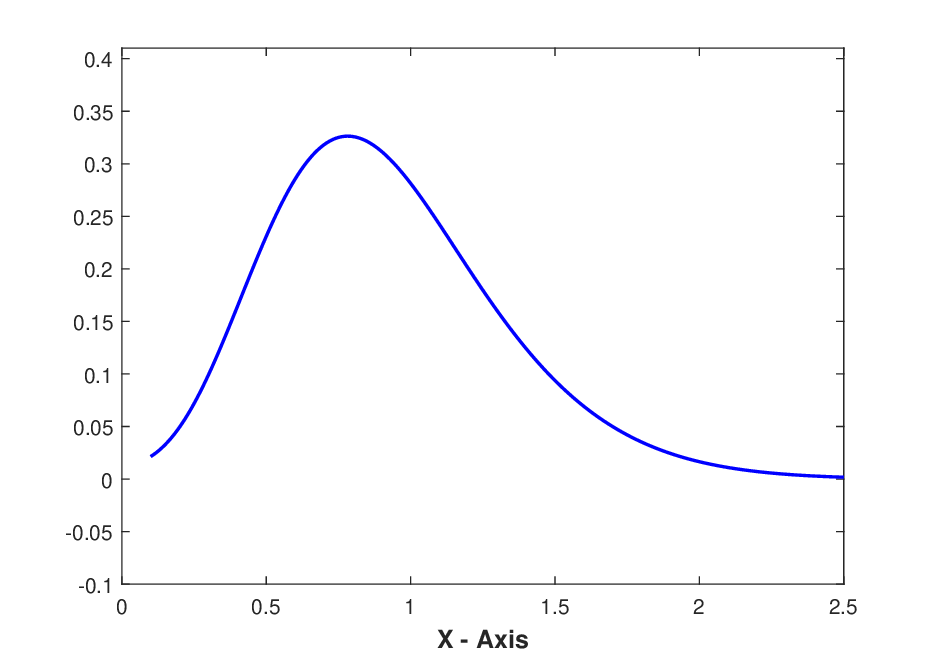}
    \caption{Graph of $F_{Y_{n:n}^p}(x)-F_{X_{n:n}^p}(x)$ for $\theta_1=0.6$ and $\theta_2=0.3$.}
    \label{fig:2}
\end{figure}
\end{example}

 The following theorem is regarding the usual stochastic ordering of lifetimes of two series systems when components' survival functions $\overline{F}(x,\,\alpha_i)$ are increasing and log-concave in parameters $\alpha_i(>0)$ for $i\in\{ 1,\,,2,\cdots,\,n\}$, the pseudo-inverse functions $\psi_1$ or $\psi_2$ satisfy that $t\psi_1^{\prime}(t)$\, or \,$t\psi_2^{\prime}(t)$ is increasing function, systems show different dependency structures and components are subject to random shocks.
\begin{theorem}
 Consider two dependent and heterogenous random vectors $X^p = (X_1I_{p_1}, \ldots, X_nI_{p_n})$ and $Y^p = (Y_1I_{p_1}, \ldots, Y_nI_{p_n}) $, where $X_i\sim F(x, \alpha_i)$, $Y_i\sim F(x, \beta_i)$ and $I_{p_i}\sim Bernoulli(p_i)$ are independent of $X_i$'s and $Y_i$'s for $i\in \{1,\cdots, n\}$. Further, consider that $X^p$ and $Y^p$ possess Archimedean survival copulas generated by $\phi_1$ and $\phi_2$ respectively and $\textbf{p} = (p_1, \ldots, p_n)$.  Then $X_{1:n}^p \preceq_{st} Y_{1:n}^p$ if 

 $(i)$ $\bm{\beta} \stackrel{m}{\preceq} \bm{\alpha}$, $\psi_2\circ \phi_1$ is super-additive and $\bm{\alpha}$, $\bm{\beta}$, $\bm{p} \in \mathbb{D^+}.$ 
 
 $(ii)$ $t\psi_1^{\prime}(t)$\, or \,$t\psi_2^{\prime}(t)$ is increasing function.  

$(iii)$ $\overline{F}(x,\theta)$ be increasing and log-concave function of the parameter $\theta$.
\end{theorem} 

\begin{proof} The survival function of $Y_{1:n}^p$ is $\overline{F}_{Y_{1:n}^p}(x) = \phi_2\left(\sum_{j=1}^{n} \psi_2\left( p_j \overline{F}(x,\,\beta_j)\right)\right)$.

Using super-additivity of $\psi_2\circ \phi_1$ and Lemma 2.6 we get that \begin{equation}\phi_2\left(\sum_{j=1}^{n} \psi_2\left( p_j \overline{F}(x,\,\alpha_j)\right)\right) \geq \phi_1\left(\sum_{j=1}^{n} \psi_1\left( p_j \overline{F}(x,\,\alpha_j)\right)\right). \label{3.3.1} \end{equation} 
  Now we will prove that   \begin{equation}\phi_2\left(\sum_{j=1}^{n} \psi_2\left( p_j \overline{F}(x,\,\beta_j)\right)\right) \geq \phi_2\left(\sum_{j=1}^{n} \psi_2\left( p_j \overline{F}(x,\,\alpha_j)\right)\right)\label{3.3.2} \end{equation} 
  For this, we will consider the function $L$ defined as  
 $ L(\bm{\beta}) =\phi_2\left(\sum_{j=1}^{n} \psi_2\left(p_j \overline{F}(x,\,\beta_j)\right)\right).$ \newline
We now show that it is a Schur-concave  function.\newline
Here, \begin{equation} \frac{\partial L(\bm{\beta})}{\partial \beta_j}=\phi_2^{\prime}\left(\sum_{j=1}^{n} \psi_2\left( p_j \overline{F}(x,\,\beta_j)\right)\right)\cdot \psi_2^{\prime}\left( p_j \overline{F}(x,\,\beta_j)\right) \cdot  p_j \frac{\partial \overline{F}(x,\,\beta_j)}{\partial \beta_j}. \label{3.3.3} \end{equation} 
 Now consider, 
\newline
$\frac{\partial L(\bm{\beta})}{\partial \beta_i} - \frac{\partial L(\bm{\beta})}{\partial \beta_j}=
\phi_2^{\prime}\left(\sum_{j=1}^{n} \psi_2\left( p_j \overline{F}(x,\,\beta_j)\right)\right)\cdot$ $$\bigg(p_i \overline{F}(x,\,\beta_i)\psi_2^{\prime}\left( p_i \overline{F}(x,\,\beta_i)\right) \cdot \frac{\frac{\partial \overline{F}(x,\,\beta_i)}{\partial \beta_i}}{\overline{F}(x,\,\beta_i)} - p_j \overline{F}(x,\,\beta_j) \cdot \psi_2^{\prime}\left( p_j \overline{F}(x,\,\beta_j)\right) \frac{\frac{\partial \overline{F}(x,\,\beta_j)}{\partial \beta_j}}{\overline{F}(x,\,\beta_j)}\bigg).$$
Without loss of generality suppose that $1\le i\le j\le n$. Note that $p_i\geq p_j$ and $\beta_i\geq\beta_j$ since $\bm{p},\, \bm{\beta} \in \mathbb{D^+}$. Now using $t_i=p_i \overline{F}(x,\,\beta_i) \ge t_j=p_j \overline{F}(x,\,\beta_j)$, increasing property of $t\psi_2^{\prime}(t)$, non-positivity of $\phi_2^{\prime}$ and log-cocave property of $\overline{F}(x,\,\beta_j)$ w.r.t $\beta_j>0$. We get, \newline
$(\beta_i-\beta_j)\bigg(\frac{\partial L(\bm{\beta})}{\partial \beta_i} - \frac{\partial L(\bm{\beta})}{\partial \beta_j}\bigg)$ $$\stackrel{sign}{=} (\beta_i-\beta_j) \bigg( p_j \overline{F}(x,\,\beta_j) \cdot \psi_2^{\prime}\left( p_j \overline{F}(x,\,\beta_j)\right) \frac{\frac{\partial \overline{F}(x,\,\beta_j)}{\partial \beta_j}}{\overline{F}(x,\,\beta_j)}-p_i \overline{F}(x,\,\beta_i)\psi_2^{\prime}\left( p_i \overline{F}(x,\,\beta_i)\right) \cdot \frac{\frac{\partial \overline{F}(x,\,\beta_i)}{\partial \beta_i}}{\overline{F}(x,\,\beta_i)}\bigg)\leq 0. $$

$Lemma\,2.2.$ implies that $L$ is Schur-concave function. Further, using the Schur-concave property of $L$ and $Lemma\, 2.3.$, $\bm{\beta} \stackrel{m}{\preceq} \bm{\alpha}\implies L(\bm{\alpha}) \leq L(\bm{\beta})$, which is inequality \eqref{3.3.2}. Combining \eqref{3.3.1} and \eqref{3.3.2} we get $\phi_2\left(\sum_{j=1}^{n} \psi_2\left( p_j \overline{F}(x,\,\beta_j)\right)\right) \ge \phi_1\left(\sum_{j=1}^{n} \psi_1\left( p_j \overline{F}(x,\,\alpha_j)\right)\right)$ that is, $\overline{F}_{Y_{1:n}^p}(x)\ge \overline{F}_{X_{1:n}^p}(x)$.
\end{proof}
 
\begin{remark} The above Theorem 3.3. which compares the reliability of two series systems having different dependency structures under the influence of random shock environment differs from the Theorem 3.8 of \citep{amini2024comparison} when the Archimedean generators $\phi_1$ and $\phi_2$ are not a log-concave function but having the property that $t\psi_1^{\prime}(t)$\, or \,$t\psi_2^{\prime}(t)$ is increasing function, where $\psi_1$ and $\psi_2$ are pseudo-inverse of Archimedean generators $\phi_1$ and $\phi_2$ respectively. It is to be noted that many Archimedean generators satisfy the condition "$t\psi_1^{\prime}(t)$\, or \,$t\psi_2^{\prime}(t)$ is an increasing function" of our Theorem 3.3 but $\phi_1$ and $\phi_2$ are not log-concave functions. For example, consider Gumbel copulas in below Remark 3.5 (i) with Archimedean generators $\phi_1(x)=e^{-x^{\frac{1}{\theta_1}}}$ and $\phi_2(x)=e^{-x^{\frac{1}{\theta_2}}}$ for $\theta_1,\, \theta_2 \in [1,\,\infty)$ then we have, $$ \frac{d^2}{dt^2}\bigg(\log\phi_i(t) \bigg)=\frac{1}{\theta_i}\bigg(1-\frac{1}{\theta_i}\bigg)t^{\frac{1}{\theta_i}-2}\ge0\quad for\quad i=1,2\quad and\quad\theta_i\ge1, \:t\ge0.$$ Which shows that $\phi_1$ and $\phi_2$ are log-convex functions hence not log-concave functions and satisfies "$t\psi_1^{\prime}(t)$\, or \,$t\psi_2^{\prime}(t)$ is an increasing function" as shown in Remark 3.5 (i). Which makes the difference between our Theorem 3.3 and Theorem 3.8 of \citep{amini2024comparison}.\end{remark} 
\begin{remark} Theorem 3.3. provides insight into the fact that when the survival function of the components of series systems is increasing and log-concave, systems possess different dependency structures and components are experiencing random shocks, we can establish a stochastic ordering of the lifetimes of series systems. There are many Archimedean copula generators and survival functions satisfying conditions in Theorem 3.3.\newline
(i) Consider Gumbel copulas with the generator defined by $\phi_1(x)=e^{-x^{\frac{1}{\theta_1}}}$ and $\phi_2(x)=e^{-x^{\frac{1}{\theta_2}}}$ for $\theta_1,\, \theta_2 \in [1,\,\infty)$. Then, It follows that $$\frac{d}{dx}(x\psi_i^{\prime}(x))=\theta_i(\theta_i-1)\frac{(-\log(x))^{\theta_i-2}}{x}$$ are non-negative functions for $i=1,2$ and $\theta_i>2$. Further, $ \psi_2 \circ \phi_1(x)=x^{\frac{\theta_2}{\theta_1}}$ is super-additive for $2<\theta_1<\theta_2$. \newline
(ii) Consider the Clayton copulas  having generator $\phi_1(x)=(1+\theta_1 x)^{-\frac{1}{\theta_1}}$ and $\phi_2(x)=(1+\theta_2 x)^{-\frac{1}{\theta_2}}$. Then simple manipulation gives, $$\frac{d}{dx}\left(x\psi_i^{\prime}(x)\right)=\frac{\theta_i}{x^{\theta_i+1}}\geq0 \,\,and \,\,  \psi_2 \circ \phi_1(x)=\frac{(1+\theta_1 x)^{\frac{\theta_2}{\theta_1}}-1}{\theta_2}$$ is super-additive for $0\leq\theta_1<\theta_2$, are satisfied. For further details, one may refer to page 147 of \citet{amini2024comparison}. On the other hand, examples of increasing and log-concave survival functions are given in $Remark\,\,3.1(ii)$.
\end{remark}
  \begin{example} Consider the random vectors $X^p =(X_1I_{p_1}, X_2I_{p_2}, X_3I_{p_3})$ and $Y^p = (Y_1I_{p_1}, X_2I_{p_2}, Y_3I_{p_3})$ where $X_i \sim \bar{F}(x,\,\alpha_i)=\frac{\alpha_i^2}{\alpha_i^2+x^2}$ and $Y_i \sim \bar{F}(x,\,\beta_i)=\frac{\beta_i^2}{\beta_i^2+x^2}$ where, $\bm{\alpha}=(7,\,4,\,1)$, $\bm{\beta}=(7,\,3,\,2)$ and
$\bm{p}=(0.6,\, 0.4,\, 0.1) \in \mathbb{D^+}$. The survival function of log-logistic distribution, $\bar{F}(x,\,\alpha_i)$ is increasing in $\alpha_i>0$ and $\log \bar{F}(x,\,\alpha_i)=2\log(\alpha_i)-\log(\alpha_i^2+x^2)$ is a concave function of $\alpha_i>0$. \newline
Now consider the Gumbel copulas with the generators defined by $\phi_1(x)=e^{-x^{\frac{1}{\theta_1}}}$ and $\phi_2(x)=e^{-x^{\frac{1}{\theta_2}}}$ for $\theta_1,\, \theta_2 \in [1,\,\infty)$ of $Remark \,\,3.5.(i)$. Then these generators satisfy the required conditions of Theorem 3.3 for $\theta_2 >\theta_1>2$ as shown in $Remark\,\,3.5(i)$.

The graph of $F_{Y_{n:n}^p}(x) - F_{X_{n:n}^p}(x)$ is shown in \textbf{Fig.3} for $\theta_1=3$ and $ \theta_2=15$ to illustrate it is a non-positive function of $x>0$.

\begin{figure}[ht]
    \centering
    \includegraphics[width=0.7\linewidth]{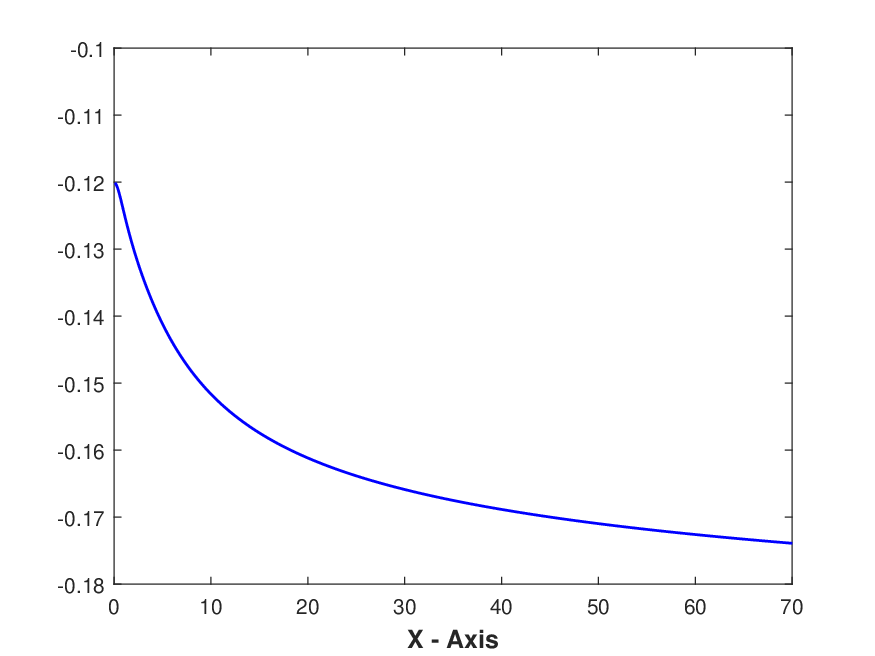}
    \caption{Graph of $F_{Y_{n:n}^p}(x) - F_{X_{n:n}^p}(x)$ for $\theta_1=3$ and $\theta_2=15$.}
    \label{fig:3}
\end{figure}
\end{example}
\newpage
Now a question arises that if the survival functions of the components in a series system are decreasing and log-convex function in parameters instead of increasing and log-concave functions in Theorem 3.3, then can we compare the reliability two systems in the presence of random shock? To find the answer to this question, we present Theorem 3.4 below. The following Theorem 3.4 is regarding the usual stochastic ordering of two series system when the survival functions  $\overline{F}(x,\theta)$ of the components are decreasing and a log-convex function of the parameter $\theta$, components may experience random shock, series systems may show different dependency structures, and $\psi_2\circ \phi_1$ is super-additive function. 
\begin{theorem}
 Consider two dependent and heterogeneous random vectors $X^p = (X_1I_{p_1}, \ldots, X_nI_{p_n})$ and $Y^p = (Y_1I_{p_1}, \ldots, Y_nI_{p_n}) $, where $X_i\sim F(x, \alpha_i)$, $Y_i\sim F(x, \beta_i)$ and $I_{p_i}\sim Bernoulli(p_i)$ are independent of $X_i$'s and $Y_i$'s for $i\in \{1,\cdots, n\}$. Further, consider that $X^p$ and $Y^p$ possess Archimedean copulas generated by $\phi_1$ and $\phi_2$ respectively and $\textbf{p} = (p_1, \ldots, p_n)$.  Then $X_{1:n}^p \leq_{st} Y_{1:n}^p$ if 

 $(i)$ $\alpha \preceq^w \beta$, $\psi_2\circ \phi_1$ is super-additive and $\bm{\alpha}$, $\bm{\beta}\in \mathbb{D^+}$, $\bm{p} \in\mathbb{I^+}$. 
 
 $(ii)$ $t\psi_1^{\prime}(t)$\, or \,$t\psi_2^{\prime}(t)$ is decreasing function.  

$(iii)$ $\overline{F}(x,\theta)$ be decreasing and log-convex function of the parameter $\theta$.
\end{theorem} 

\begin{proof} Proceeding similarly to the proof of $Theorem\,3.3.$ we have, 
\begin{equation} \frac{\partial L(\bm{\beta})}{\partial \beta_j}=\phi_2^{\prime}\left(\sum_{j=1}^{n} \psi_2\left( p_j \overline{F}(x,\,\beta_j)\right)\right)\cdot \psi_2^{\prime}\left( p_j \overline{F}(x,\,\beta_j)\right) \cdot  p_j \frac{\partial \overline{F}(x,\,\beta_j)}{\partial \beta_j} \leq 0. \label{3.2.3} \end{equation}
This shows that $L$ is decreasing in each argument when other arguments are kept constant.\newline
Now consider, 
\newline
$\frac{\partial L(\bm{\beta})}{\partial \beta_i} - \frac{\partial L(\bm{\beta})}{\partial \beta_j}=\phi_2^{\prime}\left(\sum_{j=1}^{n} \psi_2\left( p_j \overline{F}(x,\,\beta_j)\right)\right)\cdot$ $$\bigg(p_i \overline{F}(x,\,\beta_i)\psi_2^{\prime}\left( p_i \overline{F}(x,\,\beta_i)\right) \cdot \frac{\frac{\partial \overline{F}(x,\,\beta_i)}{\partial \beta_i}}{\overline{F}(x,\,\beta_i)} - p_j \overline{F}(x,\,\beta_j) \cdot \psi_2^{\prime}\left( p_j \overline{F}(x,\,\beta_j)\right) \frac{\frac{\partial \overline{F}(x,\,\beta_j)}{\partial \beta_j}}{\overline{F}(x,\,\beta_j)}\bigg).$$
Without loss of generality suppose that $1\le i\le j\le n$. Note that $p_i\leq p_j$ and $\beta_i\geq \beta_j$ since $\bm{p}\in \mathbb{I^+}, \,\bm{\beta}\in \mathbb{D^+}$. Now using $t_i=p_i \overline{F}(x,\,\beta_i) \le t_j=p_j \overline{F}(x,\,\beta_j)$, decreasing property of $t\psi_2^{\prime}(t)$, non-positivity of $\phi_2^{\prime}$ and log-convex property of $\overline{F}(x,\,\beta_j)$ w.r.t $\beta_j>0$. We get, \newline
$(\beta_i-\beta_j)\bigg(\frac{\partial L(\bm{\beta})}{\partial \beta_i} - \frac{\partial L(\bm{\beta})}{\partial \beta_j}\bigg)$ $$\stackrel{sign}{=} (\beta_i-\beta_j) \bigg( p_j \overline{F}(x,\,\beta_j) \cdot \psi_2^{\prime}\left( p_j \overline{F}(x,\,\beta_j)\right) \frac{\frac{\partial \overline{F}(x,\,\beta_j)}{\partial \beta_j}}{\overline{F}(x,\,\beta_j)}-p_i \overline{F}(x,\,\beta_i)\psi_2^{\prime}\left( p_i \overline{F}(x,\,\beta_i)\right) \cdot \frac{\frac{\partial \overline{F}(x,\,\beta_i)}{\partial \beta_i}}{\overline{F}(x,\,\beta_i)}\bigg)\geq 0. $$

$Lemma\,2.2.$ implies that $L$ is Schur-convex function. Further, using the decreasing and Schur-convex property of $L$ and the $Lemma\, 2.4.$, $\alpha \preceq^w \beta\implies L(\bm{\alpha}) \leq L(\bm{\beta})$, which is inequality \eqref{3.3.2}.  Combining \eqref{3.3.1} and \eqref{3.3.2} we get $\phi_2\left(\sum_{j=1}^{n} \psi_2\left( p_j \overline{F}(x,\,\beta_j)\right)\right) \ge \phi_1\left(\sum_{j=1}^{n} \psi_1\left( p_j \overline{F}(x,\,\alpha_j)\right)\right)$ that is, $\overline{F}_{Y_{1:n}^p}(x)\ge \overline{F}_{X_{1:n}^p}(x)$.
\end{proof}

\begin{remark}
Theorem 3.4. sheds light on the fact that when the survival functions of the components are decreasing and log-convex in parameters, systems possess different dependency structures and components are experiencing random shocks, we can establish a stochastic ordering of the lifetimes of series systems.
 \end{remark}
\begin{remark} Series systems with Archimedean generators having properties as required in  Theorem 3.4 are very common. Following examples of Archimedean generators are provided in support of Theorem 3.4.\newline
(i) Regarding conditions of theorem 3.4., consider (Remark 1. of  \citet{ghanbari2023stochastic}) $\phi_1$ and $\phi_2$ be two generators of Gumbel-Haugaard copula given by generators $\phi_1(x)=e^{1-(1+x)^{\theta_1}}$ and $\phi_2(x)=e^{1-(1+x)^{\theta_2}}$, then for $\theta_1, \theta_2 \in [1,\, \infty)$ and $\theta_1\geq\theta_2>1$, we have, $\psi_2 \circ \phi_1(x)=(1+x)^{\frac{\theta_1}{\theta_2}}-1$ is super-additive. Now consider the inverse of the generator  $\psi_i(x)=(1-\log(x))^{\frac{1}{\theta_i}}-1$ for $i \in \{1,\,2\}$ and $x\in(0,\,1)$. Simple manipulation shows that, 
$$\frac{d}{dx}(x\psi_i^{\prime}(x))=\frac{1}{\theta_i}(\frac{1}{\theta_i}-1)\frac{(1-\log(x))^{\frac{1}{\theta_i}-2}}{x}\leq0$$ for $\theta_i \geq1$ and $x\in(0,\,1)$. Therefore the function $x\psi_i^{\prime}(x)$ is decreasing for $x\in(0,\,1)$ and $\theta_i>1$.\newline
 
\end{remark}
\begin{example}  Condsider random vectors $X^p =(X_1I_{p_1}, X_2I_{p_2}, X_3I_{p_3})$ and $Y^p = (Y_1I_{p_1}, X_2I_{p_2}, Y_3I_{p_3})$ where $X_i \sim F(x,\,\alpha_i)=1-e^{-\alpha_i x^3}$ and $Y_i \sim F(x,\,\beta_i)=1-e^{-\beta_i x^3}$ (Weibull distribution with shape parameter $k=3$) where, $\bm{\alpha}=(8,\,6,\,2)$, $\bm{\beta}=(9,\,4,\,2) \in \mathbb{D^+}$ and $\bm{p}=(0.1,\, 0.2,\, 0.7) \in \mathbb{I^+}$. Then  $\bar{F}(x,\,\alpha_i)$ is decreasing in $\alpha_i>0$ and $\log \bar{F}(x,\,\alpha_i)=-\alpha_i x^3$ is a convex function of $\alpha_i$. \newline \newline
Now consider the Gumbel-Hougaard copula in Remark 3.7(i). Then these generators satisfy the required conditions of Theorem 3.4 for $\theta_1 \geq\theta_2>1$ as shown in $Remark\,\,3.7 (i)$.

The graph of $F_{Y_{n:n}^p}(x) - F_{X_{n:n}^p}(x)$ is shown in \textbf{Fig.4} for $\theta_1=9$ and $ \theta_2=2$ to illustrate it is a non-positive function of $x>0$.

\begin{figure}[ht]
    \centering
    \includegraphics[width=0.7\linewidth]{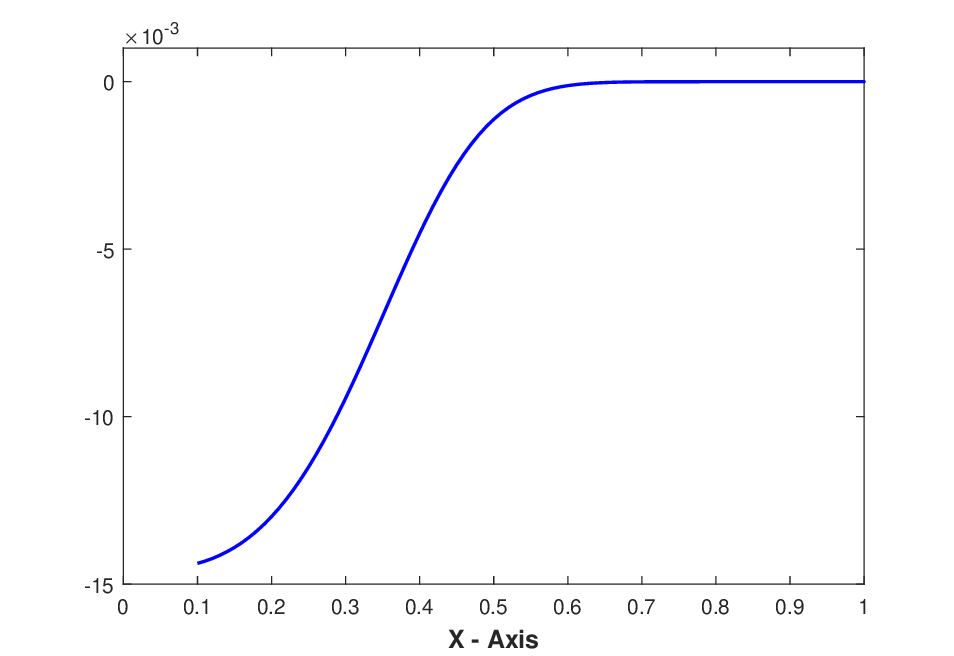}
    \caption{ Graph of $F_{Y_{n:n}^p}(x) - F_{X_{n:n}^p}(x)$ for $\theta_1=9$ and $ \theta_2=2$. }
    \label{fig:4}
\end{figure}
\end{example}
 \newpage
\section{Main results for the sub-additive case} In Section 3 we have discussed the comparison of the reliability of parallel and series systems assuming that the components are either increasing log-concave or decreasing log-convex in parameters in the presence of random shock. We also assume that different dependency structures are shown by different systems and described by Archimedean copulas with generators $\phi_1$ and $\phi_2$. However, we have considered the case when the composition $\psi_2 \circ \phi_1$ is super-additive in all the results in Section 3. But, naturally, there may be a situation where the Archimedean generators satisfy that $\psi_2 \circ \phi_1$ is a sub-additive function rather than a super-additive function. This Section 4 deals with the usual stochastic ordering of first and highest-order statistics (series and parallel systems' lifetime respectively) when the function $\psi_2 \circ \phi_1$ is a sub-additive, the survival functions of components are either an increasing log-concave or a decreasing log-convex in parameters, and systems are experiencing a random shock.

Proposition 4.1 considers the case when the components' survival functions are increasing log-concave in parameters under the presence of random shock and provides sufficient conditions for the usual stochastic ordering of parallel systems possessing different dependency structures, and $\psi_2\circ \phi_1$ is sub-additive function.
 
\begin{proposition} 
 Consider two dependent and heterogenous random vectors $X^p = (X_1I_{p_1}, \ldots, X_nI_{p_n})$ and $Y^p = (Y_1I_{p_1}, \ldots, Y_nI_{p_n}) $, where $X_i\sim F(x, \alpha_i)$ and $Y_i\sim F(x, \beta_i)$ and $I_{p_i}\sim Bernoulli(p_i)$ are independent of $X_i$'s and $Y_i$'s for $i\in \{1,\cdots, n\}$. Further, consider that $X^p$ and $Y^p$ possess Archimedean copulas generated by $\phi_1$ and $\phi_2$ respectively and $\textbf{p} = (p_1, \ldots, p_n)$. Then $X_{n:n}^p \preceq_{st} Y_{n:n}^p$ if 

 $(i)$ $\bm{\beta} \preceq^m \bm{\alpha}$, $\psi_2\circ \phi_1$ is sub-additive and $\bm{\alpha},\, \bm{\beta},\, \bm{p} \in \mathbb{D^+}$.
 
 $(ii)$ $ t\psi_1^{\prime}(1-t)$ or $ t\psi_2^{\prime}(1-t)$ is increasing in \(t\).

 $(iii)$ $\overline{F}(x,\theta)$ be increasing and log-concave function of the parameter $\theta$.
 
\end{proposition}

\begin{proof} The CDF of $Y_{n:n}^p$ is $F_{Y_{n:n}^p}(x) = \phi_2\left(\sum_{j=1}^{n} \psi_2\left(1 - p_j \overline{F}(x, \beta_j)\right)\right).$\newline
Using sub-additivity of $\psi_2\circ \phi_1$ and Lemma 2.6 ( Theorem 4.4.2 of \citet{nelsen2006archimedean})   we get that \begin{equation}\phi_1\left(\sum_{j=1}^{n} \psi_1\left(1 - p_j \overline{F}(x, \alpha_j)\right)\right) \geq \phi_2\left(\sum_{j=1}^{n}   \psi_2\left(1 - p_j \overline{F}(x, \alpha_j)\right)\right). \label{4.1.1} \end{equation}
Now using Theorem 3.4 from \citet{amini2024comparison} along with assuming $\phi_2$ as the generator of underlying Archimedean copulas, we can deduce that $F_{Y_{n;n}^p, \alpha}(x)\ge F_{Y_{n;n}^p, \beta}(x) $ that is,\begin{equation}\phi_2\left(\sum_{j=1}^{n} \psi_2\left(1 - p_j \overline{F}(x, \alpha_j)\right)\right) \geq \phi_2\left(\sum_{j=1}^{n} \psi_2\left(1 - p_j \overline{F}(x, \beta_j)\right)\right).\label{4.1.2} \end{equation} 
 Combining \eqref{4.1.1} and \eqref{4.1.2} we get $F_{X_{n:n}^p}(x)\geq F_{Y_{n:n}^p}(x)$.
\end{proof}
\begin{remark}
  In particular when, $\phi_1=\phi_2$, then $\psi_2 \circ \phi_1(x)=x$ is a sub-additive function and then above proposition 4.1. becomes the Theorem 3.4 of \citet{amini2024comparison}. Therefore Theorem 3.4 of \citet{amini2024comparison} is a particular case of Proposition 4.1. when the dependency structure among the components of two parallel systems $X^p$ and $Y^p$ are described by the same Archimedean copulas with generator $\phi_1=\phi_2$. \newline \newline
(i) It can be proved that the majorization condition $\alpha \preceq^m \beta$ in Proposition 4.1. can be more generalized to the super-majorization $\alpha \preceq^w \beta$ with the conclusion of the proposition being preserved. Therefore Proposition 4.1. is true in more general parallel systems where parameter vectors are showing super-majorization order.\newline\newline
(ii) It is clear that in $Remark\,3.1(i)$, Clayton copula generators give,  $$(\psi_2 \circ \phi_1)^{\prime\prime}(x)=(\theta_2-\theta_1)(1+\theta_1 x)^{\frac{\theta_2}{\theta_1}-2}$$ this implies $\psi_2 \circ \phi_1$  is a concave function and hence a sub-additive function for $\theta_1\geq \theta_2>0$ and $x\psi_i^{\prime}(1-x)=\frac{x}{(1-x)^{\theta_i +1}}$ for $i\in \{1,\,2\}$ are increasing functions of $x\in (0,\,1)$. Similarly for the AMH copula family if $0<\theta_2\leq\theta_1<1$ then $\psi_2 \circ \phi_1$ becomes a concave function, therefore, a sub-additive function.
\end{remark}

Theorem 4.2 considers the case when the survival functions of the components are decreasing and the log-convex function of parameters and it specifies the sufficient conditions for the usual stochastic ordering of two parallel systems assuming that the components may experience random shock and parallel systems may show different dependency structures with $\psi_2\circ \phi_1$ is a sub-additive function. 
\begin{theorem}
 Consider two dependent and heterogeneous random vectors $X^p = (X_1I_{p_1}, \ldots, X_nI_{p_n})$ and $Y^p = (Y_1I_{p_1}, \ldots, Y_nI_{p_n}) $, where $X_i\sim F(x, \alpha_i)$ and $Y_i\sim F(x, \beta_i)$ and $I_{p_i}\sim Bernoulli(p_i)$ are independent of $X_i$'s and $Y_i$'s for $i\in \{1,\cdots, n\}$. Further, consider that $X^p$ and $Y^p$ possess Archimedean copulas generated by $\phi_1$ and $\phi_2$ respectively and $\textbf{p} = (p_1, \ldots, p_n)$. Then $X_{n:n}^p \preceq_{st} Y_{n:n}^p$ if 

 $(i)$ $ \bm{\alpha} \stackrel{m}{\preceq} \bm{\beta}$, $\psi_2\circ \phi_1$ is sub-additive and $\bm{\alpha},\, \bm{\beta} \in \mathbb{D^+},\,\bm{p} \in\mathbb{I^+}. $
 
 $(ii)$ $ t\psi_1^{\prime}(1-t)$ or $ t\psi_2^{\prime}(1-t)$ is decreasing in \(t\).

 $(iii)$ $\overline{F}(x,\theta)$ be decreasing and log-convex function of the parameter $\theta$.
 
\end{theorem}
\begin{proof} The CDF of $Y_{n:n}^p$ is $F_{Y_{n:n}^p}(x) = \phi_2\left(\sum_{j=1}^{n} \psi_2\left(1 - p_j \overline{F}(x, \beta_j)\right)\right).$\newline
Using sub-additivity of $\psi_2\circ \phi_1$ and Lemma 2.6 ( Theorem 4.4.2 of \citet{nelsen2006archimedean})   we get that, \begin{equation}\phi_1\left(\sum_{j=1}^{n} \psi_1\left(1 - p_j \overline{F}(x, \alpha_j)\right)\right) \geq \phi_2\left(\sum_{j=1}^{n}   \psi_2\left(1 - p_j \overline{F}(x, \alpha_j)\right)\right). \label{4.2.1} \end{equation}
  Now we will show that  \begin{equation}\phi_2\left(\sum_{j=1}^{n} \psi_2\left(1 - p_j \overline{F}(x, \alpha_j)\right)\right) \geq \phi_2\left(\sum_{j=1}^{n} \psi_2\left(1 - p_j \overline{F}(x, \beta_j)\right)\right)\label{4.2.2} \end{equation} 
  For this, we will consider the function $L$ defined as  
 $ L(\bm{\beta}) = \phi_2\left(\sum_{j=1}^{n} \psi_2\left(1 - p_j \overline{F}(x, \beta_j)\right)\right)$.\newline
After that, following similar steps to Theorem 3.2 gives, $L$ is a Schur-concave function. Using Lemma 2.3, $\bm{\alpha} \stackrel{m}{\preceq} \bm{\beta} \implies L(\bm{\beta})\leq L(\bm{\alpha})$. Which is the inequality \eqref{4.2.2}. Combining \eqref{4.2.1} and \eqref{4.2.2} we get $F_{X_{n:n}^p}(x)\geq F_{Y_{n:n}^p}(x)$. 
\end{proof} 
\begin{remark} Consider the Gumbel copula in $Remark\,\,3.3(i)$, we have, $$(\psi_2\circ \phi_1)^{\prime\prime}(x)=\frac{\theta_2}{\theta_1}(\frac{\theta_2}{\theta_1}-1)x^{\frac{\theta_2}{\theta_1}-2}.$$Now if $1<\theta_2\leq\theta_1$ then $\psi_2\circ \phi_1$ becomes a concave function hence a sub-additive function. For other conditions see $Remark\,\,3.3(i)$. Similarly, if we consider the Gumbel-barnett copula in $Remark\,3.3(ii)$ we have, $$(\psi_2 \circ \phi_1)^{\prime\prime}(x)=\frac{e^{-x}(1-\frac{\theta_2}{\theta_1})}{(\frac{\theta_2}{\theta_1}(e^{-x}-1)-e^{-x})^2}.$$ Now if $1<\theta_1\leq\theta_2$ then $\psi_2\circ \phi_1$ becomes a concave function hence a sub-additive function.
\end{remark}
The following Theorem 4.2 is concerned about the usual stochastic ordering of two series system's lifetimes and provides sufficient conditions for the case when components' survival functions $\overline{F}(x,\,\alpha_i)$ are increasing and a log-concave in parameters $\alpha_i(>0)$ for $i\in\{ 1,\,,2,\cdots,\,n\}$, systems show different dependency structures, and components are subject to random shock with $\psi_2\circ \phi_1$ is sub-additive function.
\begin{theorem}
 Consider two dependent and heterogenous random vectors $X^p = (X_1I_{p_1}, \ldots, X_nI_{p_n})$ and $Y^p = (Y_1I_{p_1}, \ldots, Y_nI_{p_n}) $, where $X_i\sim F(x, \alpha_i)$, $Y_i\sim F(x, \beta_i)$ and $I_{p_i}\sim Bernoulli(p_i)$ are independent of $X_i$'s and $Y_i$'s for $i\in \{1,\cdots, n\}$. Further, consider that $X^p$ and $Y^p$ possess Archimedean survival copulas generated by $\phi_1$ and $\phi_2$ respectively and $\textbf{p} = (p_1, \ldots, p_n)$. Then $Y_{1:n}^p \preceq_{st} X_{1:n}^p$ if 

 $(i)$ $\bm{\alpha} \stackrel{m}{\preceq} \bm{\beta}$, $\psi_2\circ \phi_1$ is sub-additive and $\bm{\alpha}$, $\bm{\beta}$, $\bm{p} \in \mathbb{D^+}.$ 
 
 $(ii)$ $t\psi_1^{\prime}(t)$\, or \,$t\psi_2^{\prime}(t)$ is increasing function.  

$(iii)$ $\overline{F}(x,\theta)$ be increasing and log-concave function of the parameter $\theta$.
\end{theorem} 

\begin{proof} The survival function of the random variable $Y_{1:n}^p$ is $\overline{F}_{Y_{1:n}^p}(x) = \phi_2\left(\sum_{j=1}^{n} \psi_2\left( p_j \overline{F}(x,\,\beta_j)\right)\right)$.

Using sub-additivity of $\psi_2\circ \phi_1$ we get that, \begin{equation}\phi_1\left(\sum_{j=1}^{n} \psi_1\left( p_j \overline{F}(x,\,\alpha_j)\right)\right) \geq \phi_2\left(\sum_{j=1}^{n} \psi_2\left( p_j \overline{F}(x,\,\alpha_j)\right)\right). \label{4.3.1} \end{equation} 
  Now we will show that,   \begin{equation}\phi_2\left(\sum_{j=1}^{n} \psi_2\left( p_j \overline{F}(x,\,\alpha_j)\right)\right) \geq \phi_2\left(\sum_{j=1}^{n} \psi_2\left( p_j \overline{F}(x,\,\beta_j)\right)\right).\label{4.3.2} \end{equation} 
  For this, we will consider the function $L$ defined as  
 $ L(\bm{\beta}) =\phi_2\left(\sum_{j=1}^{n} \psi_2\left(p_j \overline{F}(x,\,\beta_j)\right)\right).$ \newline
After that, following similar steps to Theorem 3.3., we get $L$ is a Schur-concave function. Using Lemma 2.3, $\bm{\alpha} \stackrel{m}{\preceq} \bm{\beta} \implies L(\bm{\beta}) \leq L(\bm{\alpha})$. Which is the inequality \eqref{4.3.2}. Combining \eqref{4.3.1} and \eqref{4.3.2} we get $\bar{F}_{X_{1:n}^p}(x)\geq \bar{F}_{Y_{1:n}^p}(x)$. 
 \end{proof}
\begin{remark} Consider Gumbel copula in $Remark\,\,3.5(i)$, where generators are defined by $\phi_1(x)=e^{-x^{\frac{1}{\theta_1}}}$ and $\phi_2(x)=e^{-x^{\frac{1}{\theta_2}}}$ for $\theta_1,\, \theta_2 \in [1,\,\infty)$. Then we have, $$\frac{d}{dx}(x\psi_i^{\prime}(x))=\theta_i(\theta_i-1)\frac{(-\log(x))^{\theta_i-2}}{x}$$ are non-negative functions for $i=1,2$, $\theta_i>2$ and $x\in(0,\,1)$. Further, $ \psi_2 \circ \phi_1(x)=x^{\frac{\theta_2}{\theta_1}}$ is sub-additive for $2<\theta_2<\theta_1$. Similarly, for Clayton copula generators in $Remark\,\,3.5(ii)$, $$\frac{d}{dx}\left(x\psi_i^{\prime}(x)\right)=\frac{\theta_i}{x^{\theta_i+1}}\geq0 \,\,and \,\,  \psi_2 \circ \phi_1(x)=\frac{(1+\theta_1 x)^{\frac{\theta_2}{\theta_1}}-1}{\theta_2}$$ is sub-additive for $0\leq\theta_2<\theta_1$ and $x>0$. 
 \end{remark}
 The following Theorem 4.4 concerns the usual stochastic ordering of series systems when the survival functions  $\overline{F}(x,\theta)$ of the components of series systems are decreasing and log-convex function of the parameter $\theta$, components may experience random shock and series systems may show different dependency structures with $\psi_2\circ \phi_1$ is sub-additive function.

\begin{theorem}
 Consider two dependent and heterogeneous random vectors $X^p = (X_1I_{p_1}, \ldots, X_nI_{p_n})$ and $Y^p = (Y_1I_{p_1}, \ldots, Y_nI_{p_n}) $, where $X_i\sim F(x, \alpha_i)$, $Y_i\sim F(x, \beta_i)$ and $I_{p_i}\sim Bernoulli(p_i)$ are independent of $X_i$'s and $Y_i$'s for $i\in \{1,\cdots, n\}$. Further, consider that $X^p$ and $Y^p$ possess Archimedean copulas generated by $\phi_1$ and $\phi_2$ respectively and $\textbf{p} = (p_1, \ldots, p_n)$. Then $Y_{1:n}^p \leq_{st} X_{1:n}^p$ if 

 $(i)$ $\bm{\beta} \preceq^w \bm{\alpha}$, $\psi_2\circ \phi_1$ is sub-additive and $\bm{\alpha}$, $\bm{\beta}\in \mathbb{D^+}$, $\bm{p} \in\mathbb{I^+}$. 
 
 $(ii)$ $t\psi_1^{\prime}(t)$\, or \,$t\psi_2^{\prime}(t)$ is decreasing function.  

$(iii)$ $\overline{F}(x,\theta)$ be decreasing and log-convex function of the parameter $\theta$.
\end{theorem} 

\begin{proof} Proceeding in the similar manner to the proof of $Theorem\,4.3.$ we have,  \begin{equation}\phi_1\left(\sum_{j=1}^{n} \psi_1\left( p_j \overline{F}(x,\,\alpha_j)\right)\right) \geq \phi_2\left(\sum_{j=1}^{n} \psi_2\left( p_j \overline{F}(x,\,\alpha_j)\right)\right). \label{4.4.1} \end{equation} 
  Now we will show that   \begin{equation}\phi_2\left(\sum_{j=1}^{n} \psi_2\left( p_j \overline{F}(x,\,\alpha_j)\right)\right) \geq \phi_2\left(\sum_{j=1}^{n} \psi_2\left( p_j \overline{F}(x,\,\beta_j)\right)\right).\label{4.4.2} \end{equation} 
  For this, we will consider the function $L$ defined as  
 $ L(\bm{\beta}) =\phi_2\left(\sum_{j=1}^{n} \psi_2\left(p_j \overline{F}(x,\,\beta_j)\right)\right).$ \newline
 After that, following similar steps as Theorem 3.4., we get $L$ is decreasing and Schur-convex function. Using Lemma 2.4, $\bm{\beta} \preceq^w \bm{\alpha} \implies L(\bm{\beta})\leq L(\bm{\alpha})$. Which is the inequality \eqref{4.4.2}. Combining \eqref{4.4.1} and \eqref{4.4.2} we get $\bar{F}_{X_{1:n}^p}(x)\geq \bar{F}_{Y_{1:n}^p}(x)$. \end{proof}
\begin{remark} Consider Gumbel-Hougaard copula in $Remark\,\,3.7(i)$, given by the generators, \newline$\phi_1(x)=e^{1-(1+x)^{\theta_1}}$ and $\phi_2(x)=e^{1-(1+x)^{\theta_2}}$, then we have, $$(\psi_2 \circ \phi_1)^{\prime\prime}(x)=\frac{\theta_1}{\theta_2}(\frac{\theta_1}{\theta_2}-1)(1+x)^{\frac{\theta_1}{\theta_2}-2}$$ is a non-positive function for $\theta_1, \theta_2 \in [1,\, \infty)$, $\theta_2\geq\theta_1>1$, and $x>0$. Therefore $\psi_2 \circ \phi_1$ is a concave function, hence a sub-additive function for $\theta_2\geq\theta_1>1$. Now consider the inverse of the generator  $\psi_i(x)=(1-\log(x))^{\frac{1}{\theta_i}}-1$ for $i \in \{1,\,2\}$ and $x\in(0,\,1)$. A simple manipulation shows that, 
$$\frac{d}{dx}(x\psi_i^{\prime}(x))=\frac{1}{\theta_i}\left(\frac{1}{\theta_i}-1\right)\frac{(1-\log(x))^{\frac{1}{\theta_i}-2}}{x}\leq0$$ for $\theta_i >1$ and $x\in(0,\,1)$. Therefore the function $x\psi_i^{\prime}(x)$ is decreasing for $x\in(0,\,1)$ and $\theta_i>1$.
\end{remark}
\section{Some applications} In reliability theory, systems suffer shocks from external stress factors, stressing the system at random. These random shocks may have non-ignorable effects on the reliability of the system. Now if a component faces random shocks, it may survive or fail to do so. If $X_i$ is the lifetime of a component before considering random shock factor, then $X_iI_{p_i}$, where $I_{p_i}$ is Bernoulli random variable with mean $p_i$, is the lifetime of the same component considering the random shock factor presence in the system. Here $p_i$ is the probability that the $i^{th}$ component survives after receiving a random shock. Also, the failure of one component in the system may significantly affect the failure of the other components. Therefore the components' lifetime under the presence of random shock factor in the system are interdependent, one may refer to \citet{article1}. In this article, our theorems consider the components' lifetime distribution having either increasing log-concave or decreasing log-convex survival functions in parameters, which is abundant in statistical analysis. Our theorem provides sufficient conditions for stochastic comparison of series (parallel) systems' lifetime and helps to find the most reliable series (parallel) system among many, under given assumptions.\newline

In actuarial science, suppose $X_i$ is the claim size of $i^{th}$ insurance policyholder, and is a random amount. Now a policyholder may or may not claim for the policy. If the policyholder claims the insurance amount $X_i$ with probability $p_i$ then its random claim amount can be represented by the random variable $X_iI_{p_i}$. Now insurance claims by different policyholders are not always independent, for example, considering the case of a car accident then there is a high probability that the car insurance claim and medical claim of that car driver can occur simultaneously. Therefore different kinds of insurance claims are interdependent, one may refer to \citet{balakrishnan2018ordering}. Now if the company wants to compare the reliability of minimum $X_{1:n}^p$ (or maximum $X_{n:n}^p$) insurance claims size among different policies to identify the most reliable policy and determine the premium amount of that policy, then our theorems can be applied if the random claim sizes having survival functions either increasing log-concave or decreasing log-convex in parameters.\newline

In auction theory, bidders (sellers) present their sealed bids to the auctioneer (buyer) who then solicits the purchase of goods or services at the start of the auction. The highest bidder wins the auction and receives payment from the auctioneer in the amount of the highest price. Government agencies and big corporations frequently use this kind of auction to buy goods and services. If there are n bidders submitting prices $X_1$, ···, $X_n$, then the auctioneer's cost is $X_{n:n}$. Unexpected events may cause some buyers to withdraw from the auction before it starts. Therefore $I_{p_i}X_i$ is the auction price of $i^{th}$ bidder considering that he may or may not withdraw from bidding. Consequently, the last auction cost is the lowest order statistics derived from $I_{p_1}X_1$, ···, and $I_{p_n}X_n$, where $I_{p_i}$ indicates whether $i^{th}$ bidder withdraw or not. Here submitting prices by different bidders are interdependent and their dependency is described by Archimedean copulas, one may refer to \citet{article1}. Our theorems help to compare the reliability of the highest (lowest) bidder prices under the assumption that bidder prices have distributions with either increasing log-concave or decreasing log-convex survival functions of parameters. 
\section{Conclusion}
In this article, we studied the usual stochastic ordering of series and parallel systems when the components' survival functions are either an increasing log-concave or a decreasing log-convex function in parameters, and components may experience random shocks. We also consider that the systems exhibit different dependency structures. An interesting observation is that, if $\phi_1$(with inverse $\psi_1$) and $\phi_2$(with inverse $\psi_2$) are Archimedean generators for two random vectors of concerning series (or parallel) systems respectively, having lifetimes $X^p$ and $Y^p$ respectively, then the usual stochastic order $X^p\preceq_{st}Y^p$ reversed to $Y^p\preceq_{st}X^p$ when the composition $\psi_2 \circ\phi_1$ is changed from super-additive to sub-additive. Further investigation could focus on the comparison of the reliability of the fail-safe system (second-order statistics) under the analogous situations on the component level as well as the system level.
\section*{Funding} The author, Sarikul Islam, is financially supported by the Department of Science and Technology (DST) of the Ministry of Science and Technology, India, through a research grant.
\section*{Acknowledgments} The author, Sarikul Islam, acknowledges the financial support from DST for this work.
\section*{Declarations} The authors have no competing interests to declare that are relevant to the content of this article.
\section*{Conflict of interest} On behalf of all authors, the corresponding author states that there is no conflict of interest.
\bibliographystyle{apalike} 
\bibliography{ref} 
\end{document}